\documentclass{amsart}

\usepackage[english]{babel}
\usepackage[utf8x]{inputenc}
\usepackage[T1]{fontenc}
\usepackage{comment}
\usepackage{siunitx}


\usepackage{amsmath}
\usepackage{amssymb}
\usepackage{amsthm}
\usepackage{graphicx}
\usepackage[colorinlistoftodos]{todonotes}
\usepackage[colorlinks=true, allcolors=blue]{hyperref}
\usepackage{url}
\usepackage{xcolor}
\usepackage{tikz}
\usepackage{pgfplots}
\usepackage{tikz-cd}
\usepackage{algpseudocode}
\usepackage{tkz-euclide}

\usetikzlibrary{positioning, fit, arrows, shapes.geometric, backgrounds, calc}

\newcommand{\NN}{\mathbb{N}}

\newcommand{\RR}{\mathbb{R}}

\newcommand{\txt}[1]{\textnormal{#1}}
\newcommand{\op}{\txt{op}}

\newtheorem{thm}{Theorem}
\theoremstyle{definition}
\newtheorem{theorem}[thm]{Theorem}
\newtheorem{lemma}[thm]{Lemma}
\newtheorem{definition}[thm]{Definition}

\newtheorem{remark}[thm]{Remark}

\newtheorem{corollary}[thm]{Corollary}
\newtheorem{conjecture}{Conjecture}

\numberwithin{thm}{section}

\title[On the Erd\H{o}s-Tuza-Valtr Conjecture]{On the Erd\H{o}s-Tuza-Valtr Conjecture}

\author{Jineon Baek}
\address{Jineon Baek, University of Michigan, Department of Mathematics,  
2074 East Hall,
530 Church Street,
Ann Arbor, MI 48109-1043}
\email{jineon@umich.edu}

\begin{document}
\maketitle
\sloppy

\begin{abstract}

The Erd\H{o}s-Szekeres conjecture states that any set of more than $2^{n-2}$ points in the plane with no three on a line contains the vertices of a convex $n$-gon. 
Erd\H{o}s, Tuza, and Valtr strengthened the conjecture by stating that any set of more than $\sum_{i = n - b}^{a - 2} \binom{n - 2}{i}$ points in a plane either contains the vertices of a convex $n$-gon, $a$ points lying on a concave downward curve, or $b$ points lying on a concave upward curve.
They also showed that the generalization is actually equivalent to the Erd\H{o}s-Szekeres conjecture.

We prove the first new case of the Erd\H{o}s-Tuza-Valtr conjecture since the original 1935 paper of Erd\H{o}s and Szekeres. 
Namely, we show that any set of $\binom{n-1}{2} + 2$ points in the plane with no three points on a line and no two points sharing the same $x$-coordinate either contains 4 points lying on a concave downward curve or the vertices of a convex $n$-gon.
\end{abstract}

\section{Introduction}
A set of $n$ points on a plane is in a \emph{convex position} if they form the vertices of a convex polygon.
For simplicity, denote any set of $n$ points in convex position as an \emph{$n$-gon}.
The well-known Erd\H{o}s-Szekeres conjecture is the following.
Here, a finite set of points on a plane is in \emph{general position} if no three points are on a line and no two points share the same $x$-coordinate\footnote{It is not usually required for the $x$-coordinates to be different when stating the Erd\H{o}s-Szekeres conjecture. However, the extra assumption does not hurt generality as we can rotate the point set slightly if there is any overlap in the $x$-coordinates.}.

\begin{definition}
For any $n \geq 3$, let $N(n)$ be the maximum number of points on a plane in a general position with no subset forming an $n$-gon.
\end{definition}
\begin{conjecture}[Erd\H{o}s-Szekeres \cite{erd1960some}]
\label{conj:es}
For any $n \geq 3$, $N(n) = 2^{n-2}$.
\end{conjecture}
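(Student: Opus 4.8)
The equality $N(n) = 2^{n-2}$ is the celebrated Erd\H{o}s--Szekeres conjecture, so any attack must establish the two inequalities $N(n) \ge 2^{n-2}$ and $N(n) \le 2^{n-2}$ separately, and only the first is within reach by current methods. For the \emph{lower bound} I would use the classical recursive construction. Calling a set of points on a concave-downward arc a \emph{cap} and one on a concave-upward arc a \emph{cup}, the idea is to build, for each pair $(a,b)$ with $a,b \ge 0$, an extremal configuration $P_{a,b}$ of $\binom{a+b}{a}$ points in general position having no cap of $a+2$ points and no cup of $b+2$ points, the induction tracking Pascal's rule $\binom{a+b}{a} = \binom{a+b-1}{a-1} + \binom{a+b-1}{a}$ by placing a flattened copy of $P_{a-1,b}$ up and to the left of $P_{a,b-1}$. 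Assembling the $n-1$ blocks with $a+b = n-2$ into a single descending staircase arrangement, so that any convex chain through the whole set restricts to a cap or a cup inside each block and hence cannot collect $n$ vertices, yields a set with no convex $n$-gon of size $\sum_{a=0}^{n-2}\binom{n-2}{a} = 2^{n-2}$, exactly as required. This direction is routine and already due to Erd\H{o}s and Szekeres.

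The \emph{upper bound} $N(n) \le 2^{n-2}$ is the genuine content, and it is exactly what remains open. The standard tool is the cap--cup theorem: a general-position set with no $(a+2)$-cap and no $(b+2)$-cup has at most $\binom{a+b}{a}$ points, proved by a clean double induction that assigns to each point the lengths of the longest cap and longest cup ending there. Since an $n$-cap and an $n$-cup are themselves convex $n$-gons, any $n$-gon-free set has no $n$-cap and no $n$-cup, so taking $a=b=n-2$ gives $N(n) \le \binom{2n-4}{n-2}$. The difficulty is that $\binom{2n-4}{n-2}$ grows like $4^{n}$ up to polynomial factors, exponentially larger than the target $2^{n-2}$; the extremal $P_{n-2,n-2}$ avoids monotone $n$-chains yet still contains many non-monotone convex $n$-gons, which is precisely why this count is so lossy and why no direct cap--cup argument can close the gap.

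Closing that exponential gap is the main obstacle, and I would not expect to overcome it in full. The route I would take is the Erd\H{o}s--Tuza--Valtr refinement described in the abstract: replace the single threshold $\binom{2n-4}{n-2}$ by the finer quantity $\sum_{i=n-b}^{a-2}\binom{n-2}{i}$, which simultaneously controls convex $n$-gons, $a$-caps, and $b$-cups, and which, once proved for all $a$ and $b$, is equivalent to the Erd\H{o}s--Szekeres conjecture. Thus I would proceed parameter by parameter, establishing the refined inequality in cases of small $a$ or small $b$ by a careful analysis of the extremal configurations and hoping the structural information from those cases feeds back into the general bound. The honest expectation is that the small-parameter cases are tractable in this way, while the full upper bound---and hence the equality $N(n)=2^{n-2}$ in general---remains out of reach; this is exactly why the paper settles for a new special case rather than the whole conjecture.
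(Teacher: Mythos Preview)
The statement is a \emph{conjecture}, not a theorem: the paper does not prove it, and indeed explicitly presents it as open, proving only the special case $P(n,4,n)$ of the Erd\H{o}s--Tuza--Valtr refinement. Your proposal correctly recognizes this status; you accurately sketch the Erd\H{o}s--Szekeres lower-bound construction, correctly explain why the cups--caps bound $\binom{2n-4}{n-2}$ is too weak for the upper bound, and your suggested route via the Erd\H{o}s--Tuza--Valtr parameters is exactly the framework the paper adopts before specializing to $(n,4,n)$. There is nothing to fault here, since there is no proof in the paper to compare against and you have not overclaimed.
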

 
In 1935, Erd\H{o}s and Szekeres showed $N(n) \leq \binom{2n-4}{n-2} = O\left(4^n/\sqrt{n}\right)$ \cite{erdos1935combinatorial}.
In 1960, they provided a construction of exactly $2^{n-2}$ points with no $n$ points in convex position, showing the lower bound $N(n) \geq 2^{n-2}$ \cite{erd1960some}.
The upper bound of $N(n)$ stayed in the magnitude of $O(4^n/\sqrt{n})$ despite many improvements 
\cite{chung1998forced,kleitman1998finding,toth1998note,toth2005erdos, mojarrad2016improved,norin2016erdHos} until Suk \cite{suk2017erdHos} proved an upper bound of $N(n) \leq 2^{n + o(n)}$ in 2016.
The best upper bound so far is $N(n) \leq 2^{n + O(\sqrt{n \log n})}$ \cite{holmsen2020two}, and the precise equality $N(n) = 2^{n - 2}$ remains neither proven or disproven to this date.

Now we state the generalization of Conjecture \ref{conj:es} by Erd\H{o}s, Tuza, and Valtr \cite{erdos1996ramsey}.

\begin{definition}
An \emph{$a$-cap} (resp. \emph{$a$-cup}) is a set of $a$ points lying on the graph of a downwardly (resp. upwardly) convex function (see Figure \ref{fig:cap-cup-gon}).
\end{definition}

\begin{definition}
Call any tuple $(n, a, b)$ of integers
satisfying $2 \leq a, b \leq n \leq a + b - 2$ a \emph{triplet}.
For any triplet $(n, a, b)$, 
define $N(n, a, b)$ as the maximum number of points on a plane in general position with no subset forming an $n$-gon, $a$-cap, or $b$-cup.
\end{definition}

\colorlet{lightblue}{blue!15}
\begin{center}
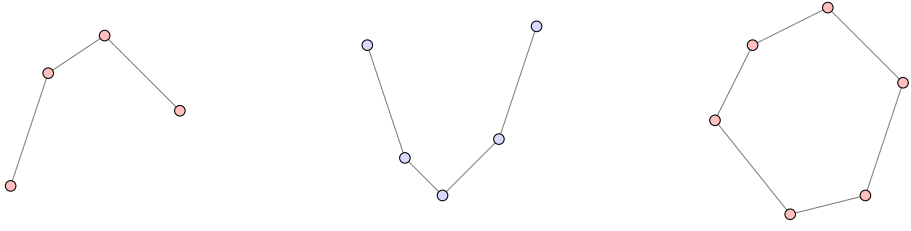
\begin{figure}[t]
\minipage{0.25\textwidth}
\centering 
\begin{tikzpicture}[scale=0.5]
  \draw[gray!100] (-1, 0) -- (0, 3) -- (1.5, 4) -- (3.5, 2);
  \draw[fill=pink] (-1, 0) circle (4pt);
  \draw[fill=pink] (0, 3) circle (4pt);
  \draw[fill=pink] (1.5, 4) circle (4pt);
  \draw[fill=pink] (3.5, 2) circle (4pt);
\end{tikzpicture}
\endminipage\hfill
\minipage{0.25\textwidth}
\centering 
\begin{tikzpicture}[scale=0.5]
  \draw[gray!100] (0, 3) -- (1, 0) -- (2, -1) -- (3.5, 0.5) -- (4.5, 3.5);
  \draw[fill=lightblue] (0, 3) circle (4pt);
  \draw[fill=lightblue] (1, 0) circle (4pt);
  \draw[fill=lightblue] (2, -1) circle (4pt);
  \draw[fill=lightblue] (3.5, 0.5) circle (4pt);
  \draw[fill=lightblue] (4.5, 3.5) circle (4pt);
\end{tikzpicture}
\endminipage\hfill
\minipage{0.25\textwidth}%
\centering 
\begin{tikzpicture}[scale=0.5]
  \draw[gray!100] (-1, 1) -- (0, 3) -- (2, 4) -- (4, 2) -- (3, -1) -- (1, -1.5) -- (-1, 1);
  \draw[fill=pink] (-1, 1) circle (4pt);
  \draw[fill=pink] (0, 3) circle (4pt);
  \draw[fill=pink] (2, 4) circle (4pt);
  \draw[fill=pink] (4, 2) circle (4pt);
  \draw[fill=pink] (3, -1) circle (4pt);
  \draw[fill=pink] (1, -1.5) circle (4pt);
\end{tikzpicture}
\endminipage
\caption{A 4-cap, 5-cup and 6-gon (from left to right)}
\label{fig:cap-cup-gon}
\end{figure}
\end{center}

\begin{conjecture}[Erd\H{o}s-Tuza-Valtr \cite{erdos1996ramsey}]
\label{conj:etv}
For any triplet $(n, a, b)$, we have
$$N(n, a, b) = \sum_{i = n - b}^{a - 2} \binom{n - 2}{i}.$$
\end{conjecture}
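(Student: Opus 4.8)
The plan is to prove the two inequalities
$N(n,a,b)\ge \sum_{i=n-b}^{a-2}\binom{n-2}{i}$ and
$N(n,a,b)\le \sum_{i=n-b}^{a-2}\binom{n-2}{i}$ separately, organized around a combinatorial reading of the right-hand side: $\sum_{i=n-b}^{a-2}\binom{n-2}{i}$ is exactly the number of monotone lattice paths of length $n-2$ (equivalently, subsets $S\subseteq\{1,\dots,n-2\}$) whose number of up-steps $|S|$ lies in the interval $[n-b,\,a-2]$. I would aim to biject extremal configurations with this family, matching the three forbidden patterns to the three path parameters: the total length $n-2$ to ``no $n$-gon'', the ceiling $|S|\le a-2$ to ``no $a$-cap'', and the floor $|S|\ge n-b$ to ``no $b$-cup''. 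Two degenerate cases anchor this dictionary: at $n=a+b-2$ the floor and ceiling coincide ($n-b=a-2$) and the sum collapses to the single term $\binom{a+b-4}{a-2}$, recovering the classical Erd\H{o}s--Szekeres cap--cup bound (where the $n$-gon constraint is vacuous); while at $a=b=n$ the range is unrestricted and the sum is $2^{n-2}$, recovering Conjecture \ref{conj:es}. This consistency is what makes the length-$(n-2)$ interpretation plausible.

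For the lower bound I would generalize the recursive $2^{n-2}$-point Erd\H{o}s--Szekeres construction. Starting from the nested configuration realizing $\binom{a+b-4}{a-2}$ points with no $a$-cap and no $b$-cup, I would prune it so that no lower chain (cup) and upper chain (cap) through a common pair of extreme vertices glue into a convex $n$-gon, tracking the count through the Pascal recurrence $\binom{n-2}{i}=\binom{n-3}{i}+\binom{n-3}{i-1}$. This splits the admissible lattice paths according to their first step and should mirror a ``delete versus keep the leftmost point'' decomposition of the configuration, the target being a point set whose points biject with the admissible paths and hence number exactly $\sum_{i=n-b}^{a-2}\binom{n-2}{i}$.

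For the upper bound, order the points $p_1,\dots,p_N$ by $x$-coordinate and attach to each $p_j$ a refined cap--cup signature extending the lengths of the longest cap and longest cup ending at $p_j$. In the pure cap--cup theorem such statistics are essentially independent and a recurrence of the form $f(a,b)\le f(a-1,b)+f(a,b-1)$ yields the binomial count; here the ``no $a$-cap'' and ``no $b$-cup'' hypotheses again bound the two coordinates of the signature, and I would upgrade the classical injection so that its image is the lattice-path family above. The new ingredient is that the ``no $n$-gon'' hypothesis must be spent to forbid exactly those signatures in which a cup ending at $p_j$ and a cap through a shared left vertex can be spliced into $n$ convex points; this is what cuts the admissible path length down from $a+b-4$ to $n-2$. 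Showing that the refined map is injective and lands in the claimed set would give $N\le \sum_{i=n-b}^{a-2}\binom{n-2}{i}$.

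The main obstacle is encoding the convex-polygon constraint into a purely local, per-point label. A convex $n$-gon is a global object whose lower hull is a cup and whose upper hull is a cap sharing the two extreme vertices, so ``no $n$-gon'' couples the cap- and cup-structure through their common endpoints, in contrast to the independent statistics that make the cap--cup case tractable by a one-step recurrence. Because the diagonal case reduces to $N(n,n,n)=2^{n-2}$, that is, to the Erd\H{o}s--Szekeres conjecture itself, no local injection of this kind can settle the full conjecture without resolving Erd\H{o}s--Szekeres; I therefore expect the honest outcome of this program to be a proof confined to parameter ranges where the coupling is tame -- in particular when $a$ is small, so that caps are short and the interaction between a fixed cap and the cups it can close into a polygon can be controlled directly. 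That small-$a$ regime is precisely where the first nontrivial case $(n,4,n)$ becomes accessible.
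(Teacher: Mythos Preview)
The statement you are attempting to prove is a \emph{conjecture}; the paper does not prove it, and indeed it is open. The diagonal case $(n,n,n)$ is the Erd\H{o}s--Szekeres conjecture, so a full proof would resolve that problem as well --- a point you yourself correctly identify at the end of your proposal. There is therefore no proof in the paper to compare against.

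Your lower-bound plan is essentially correct and is in fact known: Erd\H{o}s, Tuza, and Valtr gave the recursive construction realizing $\sum_{i=n-b}^{a-2}\binom{n-2}{i}$ points with no $n$-gon, $a$-cap, or $b$-cup, and the paper cites this (Theorem~\ref{thm:etv} and the surrounding discussion). Your upper-bound plan, however, has the genuine gap you already name: the ``refined cap--cup signature'' idea works precisely when the $n$-gon constraint is vacuous (that is, when $n\ge a+b-3$, recovering the cups--caps theorem), but for smaller $n$ the polygon condition couples the cap and cup data through shared endpoints, and no per-point injective labeling is known that encodes this coupling. Your proposal does not supply the missing ingredient; it only expresses the hope that one exists.

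What the paper actually proves is the single new case $P(n,4,n)$, and its method is quite different from the signature-injection approach you sketch. Rather than labeling points and counting, the paper works in a combinatorial model of configurations, introduces the notions of \emph{laced} and \emph{interweaved} $(n-1)$-cups, shows that two interweaved laced $(n-1)$-cups force a $(3,n-1)$-gon (Lemma~\ref{lem:inter-gon}), and then proves by induction on $n$ that any $4$-cap, $n$-cup free configuration of size $\binom{n-1}{2}+2$ must contain such a pair (Theorem~\ref{thm:main-lemma}). The $4$-cap hypothesis is used heavily: edges carry only labels $1$ or $2$, which makes the $(\alpha,\beta)$-plane two-dimensional and the extension arguments tractable. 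This is the ``small-$a$ regime'' you anticipate, but the actual mechanism is structural rather than enumerative.
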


\begin{definition}
For any triplet $(n, a, b)$, let $P(n, a, b)$ be the statement that $N(n, a, b) = \sum_{i = n - b}^{a - 2} \binom{n - 2}{i}.$
That is, $P(n, a, b)$ is the special case of Conjecture $\ref{conj:es}$ with the triplet $(n, a, b)$.
\end{definition}

Then the special case $P(n, n, n)$ of Conjecture \ref{conj:etv} is Conjecture \ref{conj:es}, as $n$-caps/cups are $n$-gons and the conjectured sum equals $2^{n-2}$.
Interestingly, they showed that the strengthened Conjecture \ref{conj:etv} is actually \emph{implied by} the original Conjecture \ref{conj:es} as well.

\begin{theorem}[\cite{erdos1996ramsey}]
\label{thm:etv}
Conjecture \ref{conj:es} and Conjecture \ref{conj:etv} are equivalent.
Specifically, for any $n \geq 3$ and any triplets $(n, a, b)$, $(n, a', b')$ with $a \geq a'$ and $b \geq b'$,
the statement $P(n, a, b)$ implies $P(n, a', b')$.
\end{theorem}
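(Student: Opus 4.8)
The plan is to establish the monotonicity assertion, from which the equivalence is immediate: Conjecture \ref{conj:etv} contains Conjecture \ref{conj:es} as the subfamily $\{P(n,n,n)\}$, so Conjecture \ref{conj:etv} implies Conjecture \ref{conj:es}, while conversely, assuming every $P(n,n,n)$, any triplet $(n,a,b)$ has $a,b \le n$ and $(n,n,n)$ is itself a triplet, so $P(n,n,n) \Rightarrow P(n,a,b)$ by monotonicity. To prove monotonicity I first reduce to a single cap-decrement. Reflecting the plane across a horizontal line turns caps into cups and vice versa while fixing $n$-gons and preserving general position, so $N(n,a,b) = N(n,b,a)$; the right-hand side of Conjecture \ref{conj:etv} is likewise invariant under $(a,b)\mapsto(b,a)$ (substitute $i \mapsto n-2-i$). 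Hence it suffices to show $P(n,a,b) \Rightarrow P(n,a-1,b)$ whenever $(n,a,b)$ and $(n,a-1,b)$ are triplets; the cup-decrement follows by reflection, and an arbitrary pair of triplet parameters $(a,b) \geq (a',b')$ is joined to $(a',b')$ by a coordinatewise-decreasing path of triplets, since every point $(a'',b'')$ on it has $a''+b'' \ge a'+b' \ge n+2$ and $2 \le a'',b'' \le n$.

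The heart of the proof is the unconditional inequality
\[
N(n,a,b) \;\ge\; N(n,a-1,b) + \binom{n-2}{a-2}. \qquad (\star)
\]
Granting it, the implication is immediate: by $(\star)$ and $P(n,a,b)$,
\[
N(n,a-1,b) \le N(n,a,b) - \binom{n-2}{a-2} = \sum_{i=n-b}^{a-2}\binom{n-2}{i} - \binom{n-2}{a-2} = \sum_{i=n-b}^{a-3}\binom{n-2}{i},
\]
and this matches the unconditional lower bound for $(n,a-1,b)$ (standard, and recoverable by iterating $(\star)$ and its reflection down to the boundary triplets $n = a''+b''-2$, where the $n$-gon is automatic and $N$ equals the Erd\H{o}s--Szekeres cap--cup number); hence $P(n,a-1,b)$ holds.

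To prove $(\star)$ I would graft a gadget onto an extremal set. Let $Q$ realize $N(n,a-1,b)$: it has $|Q| = N(n,a-1,b)$ points and no $n$-gon, $(a-1)$-cap, or $b$-cup. Let $C$ be an Erd\H{o}s--Szekeres extremal set with no $a$-cap and no $(n-a+2)$-cup; the cap--cup theorem gives $|C| = \binom{a + (n-a+2) - 4}{a-2} = \binom{n-2}{a-2}$, and since $n = a + (n-a+2) - 2$ such a set contains no $n$-gon. Translate $C$ far to the lower right of $Q$ and perturb generically, so that $Q \cup C$ is in general position, $Q$ lies entirely to the left of $C$, and every segment from a point of $Q$ to a point of $C$ has slope below all slopes internal to $Q$ or to $C$. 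Because these transition slopes are \emph{globally minimal}, a cap of $Q\cup C$ meeting both parts consists of a cap within $Q$ followed by a single point of $C$, and a cup meeting both consists of a single point of $Q$ followed by a cup within $C$. Counting shows every cap of $Q\cup C$ has at most $a-1$ points and every cup at most $b-1$; the cases needing attention are the crossing ones, contributing at most $(a-2)+1 = a-1$ and $1+(n-a+1) = n-a+2 \le b-1$ respectively, where the last inequality is the triplet condition $n \le a+b-3$ for $(n,a-1,b)$. Thus $Q\cup C$ contains no $a$-cap and no $b$-cup.

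The step I expect to be the crux — and the genuinely new point beyond the classical cap--cup merge — is excluding an $n$-gon straddling $Q$ and $C$. Split such a gon into its upper hull, a cap, and its lower hull, a cup, sharing the leftmost vertex (in $Q$) and the rightmost (in $C$). The minimal-transition rule forces the upper hull to meet $C$ in the single rightmost vertex and the lower hull to meet $Q$ in the single leftmost vertex; thus the gon uses at most $a-2$ points of $Q$ (a cap within $Q$) and at most $n-a+1$ points of $C$ (a cup within $C$, bounded because $C$ has no $(n-a+2)$-cup), for a total of at most $(a-2)+(n-a+1) = n-1$ vertices — too few for an $n$-gon. The essential design choice is that $C$ sits exactly on the cap--cup boundary $n = a + (n-a+2) - 2$, so that its cup-freeness is strong enough to cap any crossing gon at $n-1$ vertices; making this hull-and-transition analysis precise, together with the general-position perturbation, is where the real work concentrates.
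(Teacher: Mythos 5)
Your proposal is correct and takes essentially the same route as the proof the paper relies on (cited from Erd\H{o}s--Tuza--Valtr, and sketched in the paper's commented-out set-theoretic version): graft an extremal cup--cap configuration onto an extremal set so that all transition slopes are more extreme than all internal slopes, whence any crossing cap, cup, or gon is forced to be too short, and the conjectured values telescope down to the cups--caps boundary. The only difference is packaging --- you decrement one parameter at a time via the inequality $N(n,a,b) \ge N(n,a-1,b) + \binom{n-2}{a-2}$ and its reflection, while the paper's version joins extremal sets onto both sides at once and argues by contrapositive; the underlying construction and counting are the same.
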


So any counterexample of Conjecture \ref{conj:etv} for one triplet $(n, a, b)$ would disprove Conjecture \ref{conj:es} immediately.
Indeed, such a line of attack \cite{balko2017sat} succeeded in disproving a set-theoretic generalization \cite{szekeres2006computer} of Conjecture \ref{conj:es}  by disproving an analogous set-theoretic generalization of Conjecture \ref{conj:etv} for the triplet $(n, 4, n)$.

It is then natural to ask whether the original Conjecture \ref{conj:etv} will hold for the same triplet $(n, 4, n)$ or not.
We show that it does.

\begin{theorem}[Main Theorem]
\label{thm:main}
	For any $n \geq 3$, the statement $P(n, 4, n)$ holds.
	That is, $\binom{n - 1}{2} + 2$ points on a plane in general position determine either a 4-cap or a $n$-gon.
\end{theorem}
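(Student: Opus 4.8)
The plan is to recast the entire statement in terms of caps and cups, and then to run the classical Erdős–Szekeres cap–cup machinery in the regime where caps are forced to be short. The first step is to determine exactly which $n$-gons can appear once $4$-caps are forbidden. Given any $n$-gon $P$ in our set $X$, split its boundary at the leftmost vertex $L$ and the rightmost vertex $R$ into an upper chain, which is a cap, and a lower chain, which is a cup. Since $X$ has no $4$-cap the upper chain has at most three vertices, so either it is exactly $\{L,R\}$, forcing $P$ to be an $n$-cup, or it consists of $L$, a single apex $M$ lying strictly above the segment $LR$, and $R$, forcing the lower chain to be an $(n-1)$-cup with endpoints $L$ and $R$. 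Consequently, under the no-$4$-cap hypothesis, \emph{$X$ contains no $n$-gon} is equivalent to the conjunction: $X$ contains no $n$-cup, and no $(n-1)$-cup of $X$ admits a point of $X$, with $x$-coordinate strictly between those of its endpoints, lying above the chord through those endpoints. This dichotomy is the structural backbone of the argument.

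Next I would extract the bound coming from the weaker hypothesis ``no $4$-cap and no $n$-cup''. Write $N_{cc}(a,b)$ for the largest size of a general-position set avoiding both an $a$-cap and a $b$-cup; by the classical Erdős–Szekeres estimate $N_{cc}(a,b)=\binom{a+b-4}{a-2}$, so in particular $N_{cc}(4,n-1)=\binom{n-1}{2}$. Let $E\subseteq X$ be the set of right endpoints of $3$-caps. If $E$ contained an $(n-1)$-cup, then prepending to it the middle vertex of a $3$-cap ending at its leftmost point would yield either a $4$-cap or an $n$-cup, a contradiction; hence $E$ has no $(n-1)$-cup and $|E|\le N_{cc}(4,n-1)=\binom{n-1}{2}$. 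On the other hand $X\setminus E$ has no $3$-cap, so it is a single cup, of length at most $n-1$ because there is no $n$-cup. This already yields $|X|\le\binom{n-1}{2}+(n-1)$, overshooting the target $\binom{n-1}{2}+1$ by exactly $n-2$.

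The crux is to recover these $n-2$ points, and this is exactly where the second half of the dichotomy — the absence of a dominated $(n-1)$-cup — must be spent, as it is the only hypothesis not yet used. I would close the gap by induction on $n$ with inductive hypothesis $P(n-1,4,n-1)$: delete a carefully chosen set of $n-2$ points so that the count drops from $\binom{n-1}{2}+2$ to $\binom{n-2}{2}+2$, apply the inductive hypothesis (the no-$4$-cap property being inherited) to produce an $(n-1)$-gon, and then show that the deleted points, together with the no-domination constraint, force this $(n-1)$-gon to extend to an $n$-gon. By the same dichotomy an $(n-1)$-gon is either an $(n-1)$-cup or an $(n-2)$-cup with an apex; in both cases forbidding an $n$-gon simultaneously forbids a cup-extension and an apex above the endpoint chord, and the plan is to convert these forbidden positions into a deficiency of exactly $n-2$ in the leftover cup $X\setminus E$. \emph{The main obstacle is precisely this extension step}: one must select the $n-2$ deleted points so that the inductively produced $(n-1)$-gon is guaranteed to sit in an extendable location, and then turn the impossibility of extending it into the required quantitative loss. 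Finally, the matching lower bound $N(n,4,n)\ge\binom{n-1}{2}+1$ is the construction side of the conjecture, which I would supply by the standard recursive stacked-cups configuration, verifying directly that it contains neither a $4$-cap nor an $n$-gon.
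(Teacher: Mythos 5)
Your opening dichotomy (under the no-$4$-cap hypothesis, an $n$-gon is either an $n$-cup or an $(n-1)$-cup with one apex above its endpoint chord) is correct and matches the reformulation the paper works with (4-cap, $n$-cup or $(3,n-1)$-gon, Theorem \ref{thm:main-thm}); your count via the set $E$ of right endpoints of $3$-caps correctly gives $|X|\le\binom{n-1}{2}+(n-1)$, and the deletion arithmetic $\binom{n-1}{2}+2-(n-2)=\binom{n-2}{2}+2$ is right. But the proposal stops exactly where the proof has to begin. You propose to delete a ``carefully chosen'' set of $n-2$ points, apply $P(n-1,4,n-1)$ to produce an $(n-1)$-gon, and then argue that the deleted points force that $(n-1)$-gon to extend to an $n$-gon --- and you yourself flag this extension step as the main obstacle. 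That step is the entire difficulty, and it cannot be closed in the form you set it up: an $(n-1)$-gon handed to you by the inductive hypothesis carries no positional information tying it to the deleted points, so nothing guarantees that any deleted point lies to the correct side of it with the correct slope. An $(n-1)$-gon consisting of an $(n-2)$-cup plus an apex, sitting in the middle of the smaller configuration, simply need not be extendable, and no choice of $n-2$ deleted points made \emph{before} seeing the gon can repair this. Gons do not propagate through this induction; this is precisely why the paper never inducts on $P(\cdot,4,\cdot)$ itself.

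The paper's resolution is to strengthen the inductive statement rather than to induct on gon-existence. It proves (Theorem \ref{thm:main-lemma}) that any $4$-cap, $n$-cup free configuration of size $\binom{n-1}{2}+2$ contains a pair of \emph{interweaved laced} $(n-1)$-cups --- two $(n-1)$-cups from $p$ to $r$ and $q$ to $s$ with $p<q\le r<s$, each equipped with auxiliary cups at its endpoints of total length $n-1$ --- and shows separately (Lemmas \ref{lem:cupcup} and \ref{lem:inter-gon}) that such a pair forces a $(3,n-1)$-gon. The laced/interweaved data is exactly the positional information your proposal lacks, and it \emph{does} survive the induction: the deleted set is the specific set $\Delta$ of leftmost points of rows $1,\dots,n-2$ of the $(\alpha,\beta)$-plane (leftmost point of each value of the cup-statistic $\beta$), chosen so that, after a reflection and the case handled by Lemma \ref{lem:david}, the remainder $S'=S\setminus\Delta$ is $(n-1)$-\emph{cup} free; the inductive hypothesis then yields a pair of interweaved laced $(n-2)$-cups in $S'$, and each one extends to the left by a designated point of $\Delta$ along a label-$1$ edge, the extended pair remaining interweaved and laced. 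So your skeleton (dichotomy, counting, deleting $n-2$ points, induction) agrees with the paper's, but the key ideas --- the interweaved-laced strengthening of the inductive statement, the $(\alpha,\beta)$-plane machinery used to choose $\Delta$, and the lemma converting the pair into a gon --- are missing, and without them the induction as you propose it does not close.
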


Related, Erd\H{o}s and Szekeres proved the following \emph{cups-caps theorem} in 1935 \cite{erdos1935combinatorial}.
\begin{theorem}[Erd\H{o}s–Szekeres]
\label{thm:capcup}
For any $a, b \geq 2$, any set of more than $\binom{a+b-4}{a-2}$ points in general position contains either an $a$-cap or $b$-cup.
\end{theorem}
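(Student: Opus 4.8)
The plan is to prove the contrapositive by induction on $a + b$: I will show that any set $S$ in general position with no $a$-cap and no $b$-cup satisfies $|S| \leq \binom{a+b-4}{a-2}$. The base cases are $a = 2$ and $b = 2$. When $a = 2$, any two points already form a $2$-cap (they lie on the graph of some downwardly convex function), so avoiding a $2$-cap forces $|S| \leq 1 = \binom{b-2}{0}$, matching the claimed bound; the case $b = 2$ is symmetric with $\binom{a-2}{a-2} = 1$. Thus the induction can start.

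For the inductive step I assume $a, b \geq 3$ and that the bound holds for all smaller values of $a + b$. Given $S$ with no $a$-cap and no $b$-cup, I will split $S$ according to a local extremal property: let $A \subseteq S$ be the set of points that occur as the rightmost point of some $(a-1)$-cap contained in $S$, and let $B = S \setminus A$. The strategy is to show that each part avoids one more structure than $S$ does, so that the inductive hypothesis applies to each with one index decreased.

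The easy half is $B$: by construction no point of $B$ is the right endpoint of an $(a-1)$-cap, so $B$ contains no $(a-1)$-cap at all; since $B \subseteq S$ it also contains no $b$-cup, and the inductive hypothesis gives $|B| \leq \binom{a+b-5}{a-3}$. The crux is showing that $A$ contains no $(b-1)$-cup, which will yield $|A| \leq \binom{a+b-5}{a-2}$. I will argue by contradiction: suppose $A$ held a $(b-1)$-cup with leftmost point $p$. Since $p \in A$, it is also the right endpoint of an $(a-1)$-cap; write $q$ for the point immediately preceding $p$ in that cap and $r$ for the point immediately following $p$ in the cup (both exist because $a - 1 \geq 2$ and $b - 1 \geq 2$). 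I then compare the slope of $\overline{qp}$ with the slope of $\overline{pr}$: if the incoming cap-slope is at least the outgoing cup-slope, appending $r$ to the $(a-1)$-cap keeps the slopes decreasing and produces an $a$-cap in $S$; otherwise, prepending $q$ to the $(b-1)$-cup keeps the slopes increasing and produces a $b$-cup in $S$. Either outcome contradicts the hypothesis on $S$, so no such $(b-1)$-cup exists.

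Combining the two bounds via Pascal's identity yields $|S| = |A| + |B| \leq \binom{a+b-5}{a-2} + \binom{a+b-5}{a-3} = \binom{a+b-4}{a-2}$, completing the induction. The main obstacle is the slope-comparison step at the shared vertex $p$: one must verify that the turning condition genuinely forces either a rightward extension of the cap or a leftward extension of the cup, and that the general-position hypothesis (distinct $x$-coordinates and no three collinear points) guarantees all relevant slopes are well defined and that the comparison places us strictly in one of the two cases.
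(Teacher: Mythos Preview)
Your argument is correct: it is exactly the classical 1935 Erd\H{o}s--Szekeres induction, splitting $S$ into the set $A$ of right endpoints of $(a-1)$-caps and its complement, then using the slope comparison at the shared vertex $p$ to force an extension to an $a$-cap or a $b$-cup. The general-position hypothesis indeed makes all slopes well defined and rules out the equality case, so the dichotomy is clean.

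The paper, however, takes a different route. Rather than proving Theorem~\ref{thm:capcup} directly, it derives the bound (in the combinatorial setting of Theorem~\ref{thm:capcup-set}) as a corollary of Theorem~\ref{thm:inj}: one fixes a slope labeling of the $a$-cap free configuration, assigns to each point $p$ the tuple $\alpha(p) = (\alpha_1(p),\dots,\alpha_{a-2}(p)) \in T_{a,b}$, and shows this assignment is injective; since $|T_{a,b}| = \binom{a+b-4}{a-2}$, the bound follows in one stroke with no induction. Your proof is more self-contained and stays in the plane, while the paper's approach front-loads the machinery of slope labelings and the $\alpha$-statistic precisely because that machinery---the $(\alpha,\beta)$-plane picture---is what drives the main results in Sections~\ref{sec:alpha-beta} and~\ref{sec:laced}. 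So the paper's proof is less elementary in isolation but pays dividends later, whereas yours is the clean standalone argument.
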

For any $a, b \geq 2$, the cases $P(a + b - 3, a, b)$ and $P(a + b - 2, a, b)$ of Conjecture \ref{conj:etv} are immediate consequences of this theorem. 
To see this, observe that any $(a + b - 3)$-gon either contains an $a$-cap on top or a $b$-cap on the bottom, and that the conjectured values $N(a + b - 3, a, b) = \binom{a+b-5}{a-3} + \binom{a+b-5}{a-2} = \binom{a+b-4}{a-2}$ and $N(a + b - 2, a, b) = \binom{a+b-4}{a-2}$ match with the cups-caps theorem.
Our Theorem \ref{thm:main} is not a consequence of the cups-caps theorem as $n < a + b - 3$ in the triplet $(n, 4, n)$.

By Theorem \ref{thm:etv}, for triplets $(n, a, b)$ with a fixed $n$, the cases $P(n, a, b)$ of Conjecture \ref{conj:etv} form a pyramid of implications from top to bottom (Figure \ref{fig:pyramid}).
On the top, we have the Erd\H{o}s-Szekeres conjecture (Conjecture \ref{conj:es}),
and at the bottom, we have the cups-caps theorem (Theorem \ref{thm:capcup}).
Our theorem lies strictly in the middle of them (Theorem \ref{thm:main}).
To the best of our knowledge, this is the first new instance of Conjecture \ref{conj:etv} proven so far since the original 1935 paper \cite{erdos1935combinatorial} of Erdős and Szekeres.

\begin{center}
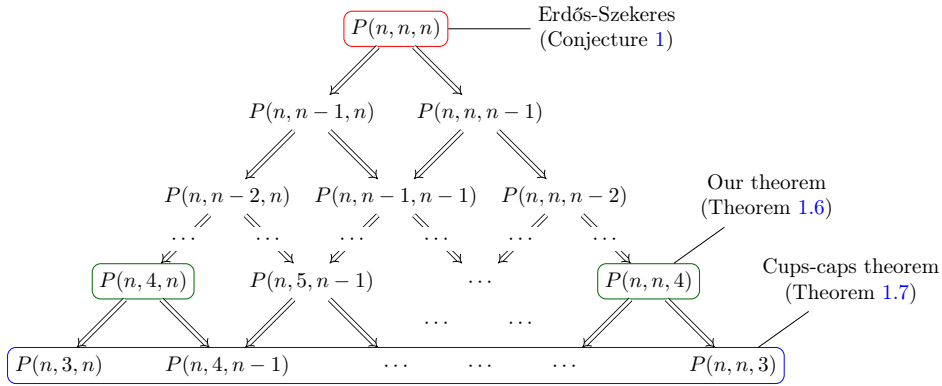
\begin{figure}[h]
\centering 
\scalebox{0.8}{
\begin{tikzpicture}
\begin{scope}[every node/.style={rectangle}]
	\def\ux{1.4}
	\def\uy{1.4}
    \node[draw=red, fill=white, rounded corners] (N00) at (0,0) {$P(n, n, n)$};
    \node (N10) at (-\ux,-1*\uy) {$P(n, n - 1, n)$};
    \node (N11) at (\ux,-1*\uy) {$P(n, n, n - 1)$};
    \node (N20) at (-2*\ux,-2*\uy) {$P(n, n - 2, n)$};
    \node (N21) at (0,-2*\uy) {$P(n, n - 1, n - 1)$};
    \node (N22) at (2*\ux,-2*\uy) {$P(n, n, n - 2)$};
    \node[draw=black!60!green, fill=none, rounded corners] (N30) at (-3*\ux,-3*\uy) {$P(n, 4, n)$};
    \node (N31) at (-1*\ux,-3*\uy) {$P(n, 5, n - 1)$};
    \node (N32) at (1*\ux,-3*\uy) {$\phantom{P(}\cdots\phantom{P)}$};
    \node[draw=black!60!green, fill=none, rounded corners] (N33) at (3*\ux,-3*\uy) {$P(n, n, 4)$};
    \node (N40) at (-4*\ux,-4*\uy) {$P(n, 3, n)$};
    \node (N41) at (-2*\ux,-4*\uy) {$P(n, 4, n - 1)$};
    \node (N42) at (0*\ux,-4*\uy) {$\phantom{P(}\cdots\phantom{P)}$};
    \node (N42.5) at (\ux,-4*\uy) {$\phantom{P(}\cdots\phantom{P)}$};
    \node (N43) at (2*\ux,-4*\uy) {$\phantom{P(}\cdots\phantom{P)}$};
    \node (N44) at (4*\ux,-4*\uy) {$P(n, n, 3)$};
    
    \node[on background layer, draw=blue, fill=none, rectangle, rounded corners, fit=(N40) (N44), inner sep=0] (all) {};
    
    \node[align=center] (L0) at (2.5*\ux, 0) {Erd\H{o}s-Szekeres \\ (Conjecture \ref{conj:es})};
    \draw (N00) -- (L0);
    \node[align=center] (L1) at (4.4*\ux, -2*\uy) {Our theorem \\ (Theorem \ref{thm:main})};
    \draw (N33) -- (L1);
    \node[align=center] (L2) at (5.4*\ux, -3*\uy) {Cups-caps theorem \\ (Theorem \ref{thm:capcup})};
    \draw (N44) -- (L2);
\end{scope}

\begin{scope}
	\draw[-implies, double equal sign distance] (N00) -- (N10);
	\draw[-implies, double equal sign distance] (N00) -- (N11);
	\draw[-implies, double equal sign distance] (N10) -- (N20);
	\draw[-implies, double equal sign distance] (N10) -- (N21);
	\draw[-implies, double equal sign distance] (N11) -- (N21);
	\draw[-implies, double equal sign distance] (N11) -- (N22);
	
	\draw[-implies, double equal sign distance] (N20) -- (N30) node [midway, fill, color=white, text=black] {$\cdots$};
	\draw[-implies, double equal sign distance] (N20) -- (N31) node [midway, fill, color=white, text=black] {$\cdots$};
	\draw[-implies, double equal sign distance] (N21) -- (N31) node [midway, fill, color=white, text=black] {$\cdots$};
	\draw[-implies, double equal sign distance] (N21) -- (N32) node [midway, fill, color=white, text=black] {$\cdots$};
	\draw[-implies, double equal sign distance] (N22) -- (N32) node [midway, fill, color=white, text=black] {$\cdots$};
	\draw[-implies, double equal sign distance] (N22) -- (N33) node [midway, fill, color=white, text=black] {$\cdots$};
	
	\draw[-implies, double equal sign distance] (N30) -- (N40);
	\draw[-implies, double equal sign distance] (N30) -- (N41);
	\draw[-implies, double equal sign distance] (N31) -- (N41);
	\draw[-implies, double equal sign distance] (N31) -- (N42);
	\draw[draw=none, -implies, double equal sign distance] (N32) -- (N42) node [midway] {$\cdots$};
	\draw[draw=none, -implies, double equal sign distance] (N32) -- (N43) node [midway] {$\cdots$};
	\draw[-implies, double equal sign distance] (N33) -- (N43);
	\draw[-implies, double equal sign distance] (N33) -- (N44);
\end{scope}

\end{tikzpicture}
}
\caption{A schematic diagram of the implications between Conjecture \ref{conj:es}, Theorem \ref{thm:capcup} and our Theorem \ref{thm:main}.}
\label{fig:pyramid}
\end{figure}
\end{center}

The proof of main Theorem \ref{thm:main} generalizes to a purely combinatorial model of convexity (Theorem \ref{thm:main-thm}). 
Section \ref{sec:set} describes the combinatorial setup we mainly work with.
Section \ref{sec:statistic} describes the \emph{$\alpha$-statistic}, a function for understanding $a$-cap, $b$-cup free configurations especially with nearly maximal number of points.
Section \ref{sec:alpha-beta} introduces $(\alpha, \beta)$-plane, an useful notion for understanding 4-cap free configurations, 
and motivates the definition of \emph{interweaved laced cups} (Definition \ref{def:inter-laced} and \ref{def:laced}).
Finally, Section \ref{sec:laced} proves main Theorem \ref{thm:main} by finding interweaved laced cups with induction (Theorem \ref{thm:main-lemma}). 

\section{A combinatorial model of convexity}
\label{sec:set}
We introduce a purely combinatorial model of convexity that we mainly work with. It is a slight modification of the one first suggested in \cite{szekeres2006computer} and explored further in \cite{fox2012erdHos}, \cite{moshkovitz2014ramsey} and \cite{balko2017sat}.

\begin{definition}
In this paper, a \emph{configuration} is a finite set $S$ of elements called \emph{points} or \emph{vertices} equipped with the following structures.
\begin{itemize}
	\item A linear ordering $<$ of the vertices.
	\item For any subset of $S$ with size 3, an arbitrary assignment of whether they form a cap or a cup.
\end{itemize}
In standard terms, a configuration is a bi-coloring of a finite, complete 3-uniform hypergraph with a prescribed linear ordering of vertices.
\end{definition}

Given a finite set of points in general position, we can make a configuration by ordering the points in their increasing $x$-coordinates and assigning each size 3 subset to be either a cap or cup according to its position in the plane.
Let's say that a configuration is \emph{realizable} if it can be constructed from actual points in this way.
The notion of caps and cups in plane are generalized to arbitrary configurations which may not be realizable.
\begin{definition}
Denote any set of vertices $x_1 < x_2 < \cdots < x_a$ of a configuration as simply $x_1x_2\cdots x_a$ in the increasing order. 

In a configuration, a set $C = x_1x_2 \cdots x_a$ of $a$ vertices forms an $a$-cup (resp. $a$-cap) if any three consecutive points $x_{i-1} x_i x_{i+1}$ form a cup (resp. cap)  for any $1 < i < a$.
In particular, we allow 1-cups and 1-caps.
The \emph{size} of $C$, also denoted $|C|$, is the number of vertices $a$ in $C$.

We say that $x_1$ is the \emph{starting point} of $C$ and $x_a$ is the \emph{ending point} of $C$.
The points $x_1$ and $x_a$ are the \emph{endpoints} of $C$.
Equivalently, we say that $C$ starts with $x_1$ and ends with $x_a$, or that $C$ is a cap (resp. cup) from $x_1$ to $x_a$.
Call any pair $x < y$ of vertices in a configuration an \emph{edge}.
If the size $a$ of $C$ is at least 2, we say that $C$ \emph{starts} with the edge $x_1x_2$ and \emph{ends} with the edge $x_{a-1}x_a$.

For a cap (resp. cup) $C$ from vertex $s$ to vertex $t$, say that $C$ \emph{extends to left} with the vertex $x$ (or edge $xs$) if $xC$ is also a cap (resp. cup).
Likewise, say that $C$ \emph{extends to right} with the vertex $x$ (or edge $tx$) if $Cx$ forms a cap (resp. cup).
\end{definition}

We also introduce the \emph{mirror reflection} of a configuration.
For realizable configurations, this corresponds to reflecting the points along the $y$-axis.
\begin{definition}
The \emph{mirror reflection} $S^{op}$ of a configuration $S$ 
is the configuration with the same vertex set and assignments of 3-caps and 3-cups, but the prescribed linear ordering reversed.
In this way, a cap (resp. cup) $C$ in $S$ naturally corresponds to the reflected cap (resp. cup) $C^{op}$ in $S^{op}$ with the same set of vertices.
\end{definition}

We introduce two possible generalizations of an $n$-gon in this combinatorial model of convexity.
\begin{definition}
In a configuration, a \emph{weak $(a, b)$-gon} is a pair of $a$-cap $C_1$ and $b$-cup $C_2$ sharing the same endpoints $x$ and $y$.
An \emph{weak $n$-gon} is any weak $(a, b)$-gon with $a + b = n + 2$.

On the other hand, a \emph{strong $(a, b)$-gon} is a weak $(a, b)$-gon $(C_1, C_2)$ with the additional constraint that $C_1 \cap C_2 = \{x, y\}$.
An \emph{strong $n$-gon} is any strong $(a, b)$-gon with $a + b = n + 2$.

An \emph{$(a, b)$-gon} or \emph{$n$-gon} denotes the weak $(a, b)$-gon or $n$-gon by default.
\end{definition}
The only difference is that a weak $n$-gon allows the cap and cup to have overlapping vertices in the middle, while a strong $n$-gon does not. 
Note also that there is no difference in any realizable configurations. 
We use the following notion.
\begin{definition}
A configuration is $a$-cap (resp. $b$-cup or weak/strong $n$-gon) free if it has no $a$-cap (resp. $b$-cup or weak/strong $n$-gon) in the configuration.	
\end{definition}
Peters and Szekeres \cite{szekeres2006computer} proposed to generalize Conjecture \ref{conj:es} by stating that for any $n \geq 2$, the maximum size of a \emph{strong} $n$-gon free configuration is $2^{n-2}$, and supplied a computer proof for the case $n = 6$.
Later, Balko and Valtr \cite{balko2017sat} found counterexamples for $n = 7$ and $8$ with SAT solvers by finding counterexamples for analogues of Conjecture \ref{conj:etv}.
We instead propose to use the definition of a \emph{weak} $n$-gon instead to generalize Conjecture \ref{conj:es}.
\begin{conjecture}[Set-theoretic Erd\H{o}s-Szekeres]
\label{conj:es-set}
For any $n \geq 2$, 
the maximum size $\widehat{N}(n)$ of a \emph{weak} $n$-gon free configuration is equal to 
$2^{n-2}$.
\end{conjecture}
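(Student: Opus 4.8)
The plan is to prove the conjectured equality $\widehat{N}(n) = 2^{n-2}$ by treating the lower and upper bounds separately, and I expect essentially all of the difficulty to sit in the upper bound.

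For the lower bound $\widehat{N}(n) \geq 2^{n-2}$, I would reuse the classical Erd\H{o}s--Szekeres construction of $2^{n-2}$ points in general position with no $n$ points in convex position, and read it off as a configuration. Two observations make this a \emph{weak} $n$-gon free configuration. First, since the construction is realizable, weak and strong $n$-gons coincide on it, so it suffices to exclude strong $n$-gons. Second, a strong $(a, b)$-gon with $a + b = n + 2$ is exactly the decomposition of $n$ points in convex position obtained by cutting along the leftmost and rightmost vertex: the upper hull is an $a$-cap lying above the chord, the lower hull is a $b$-cup lying below it, the two chains meet only at the two shared endpoints, and together they use $a + b - 2 = n$ vertices. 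Hence a strong $n$-gon is present if and only if $n$ of the points are in convex position, and the construction has neither.

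For the upper bound $\widehat{N}(n) \leq 2^{n-2}$, I would induct on $n$ and aim for the recursion $2^{n-2} = 2^{(n-1)-2} + 2^{(n-1)-2}$, splitting the vertex set of a weak $n$-gon free configuration into two parts each of which is forced to be weak $(n-1)$-gon free. The bookkeeping I would attempt mirrors the proof of the cups-caps Theorem \ref{thm:capcup} and the $\alpha$-statistic of Section \ref{sec:statistic}: to each vertex attach the lengths of the longest cap and longest cup ending there, and use weak $n$-gon freeness --- the requirement that no cap and cup share \emph{both} endpoints with total length $n + 2$ --- to bound how many vertices can carry each value. The reason to prefer the weak over the strong notion here is exactly that the strong version already admits configurations larger than $2^{n-2}$ for $n = 7, 8$ (Balko--Valtr), whereas forbidding the weaker structure is a strictly stronger hypothesis that one hopes restores the clean doubling recursion.

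The hard part is the upper bound, and the obstacle is not computational but foundational: realizable point sets embed into configurations, and a weak $n$-gon free realizable configuration is precisely a point set with no convex $n$-gon, so any proof of $\widehat{N}(n) \leq 2^{n-2}$ would immediately give $N(n) \leq 2^{n-2}$ and settle Conjecture \ref{conj:es}. Thus the upper bound cannot be easier than the original problem, and in the purely combinatorial model one additionally loses the transitivity of convex position that powers the usual merging arguments, so a naive induction overcounts precisely the vertex overlap that \emph{weak} $n$-gons permit. Controlling this overlap is where I expect the real work to lie, and it is why I would first establish the boundary slices $P(n, 4, n)$ and its mirror $P(n, n, 4)$ (Theorem \ref{thm:main}) as a testing ground before attacking the full conjecture.
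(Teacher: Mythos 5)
You are attempting to prove Conjecture \ref{conj:es-set}, which the paper itself does not prove: it is stated only as a conjecture, so there is no paper proof to compare against, and any complete argument would be a major new result rather than a routine verification. Your lower-bound half is correct. The $2^{n-2}$-point construction of Erd\H{o}s and Szekeres, read as a configuration, is weak $n$-gon free: in a realizable configuration the interior vertices of a cap lie strictly above the chord through its endpoints while those of a cup lie strictly below, so a cap and cup sharing endpoints can share no other vertex, weak and strong $n$-gons coincide, and a strong $n$-gon is exactly a set of $n$ points in convex position. Hence $\widehat{N}(n) \geq 2^{n-2}$.

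The genuine gap is the upper bound, and you have in effect named it yourself: since every point set in general position yields a realizable configuration, and a weak $n$-gon free realizable configuration is precisely a point set with no convex $n$-gon, the inequality $\widehat{N}(n) \leq 2^{n-2}$ would immediately give $N(n) \leq 2^{n-2}$, i.e.\ Conjecture \ref{conj:es}, open since 1935 with the best known bound being $N(n) \leq 2^{n + O(\sqrt{n \log n})}$. Your proposed doubling recursion --- splitting a weak $n$-gon free configuration into two parts, each forced to be weak $(n-1)$-gon free --- is not carried out, and no such splitting is known even in the realizable case; that is exactly the content of the open problem. Moreover, the bookkeeping you propose to imitate (the $\alpha$-statistic of Section \ref{sec:statistic}) applies to configurations avoiding an $a$-cap \emph{and} a $b$-cup (Theorem \ref{thm:capcup-set}); it gives no handle on configurations that merely avoid weak $n$-gons, which is precisely why the paper proves only the single slice $P(n,4,n)$ (Theorem \ref{thm:main-thm}) of Conjecture \ref{conj:main-conj} rather than Conjecture \ref{conj:es-set}. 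So what you have is a correct lower bound plus an honest research plan, not a proof.
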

From now on, we omit the word \emph{weak} when we mention any $(a, b)$-gon or $n$-gon.
We can also state the analogous generalization of Conjecture \ref{conj:etv} to arbitrary configurations. 
\begin{conjecture}[Set-theoretic Erd\H{o}s-Tuza-Valtr]
\label{conj:main-conj}
For any triplet $(n, a, b)$, the maximum size $\widehat{N}(n, a, b)$ of a weak $n$-gon, $a$-cap and $b$-cup free configuration is equal to $\sum_{i = n + 2 - b}^{a} \binom{n - 2}{i - 2}$.
\end{conjecture}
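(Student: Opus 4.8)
The plan is to prove Conjecture \ref{conj:main-conj} by separating its tight value into a lower and an upper bound, and—for the upper bound—reducing every triplet to a single diagonal case. Write $\widehat{P}(n,a,b)$ for the assertion $\widehat{N}(n,a,b) = \sum_{i=n+2-b}^{a}\binom{n-2}{i-2}$; substituting $j=i-2$ rewrites the target count as $\sum_{j=n-b}^{a-2}\binom{n-2}{j}$, matching the Erd\H{o}s-Tuza-Valtr value of Conjecture \ref{conj:etv}. The diagonal statement $\widehat{P}(n,n,n)$ is exactly the set-theoretic Erd\H{o}s-Szekeres Conjecture \ref{conj:es-set}. Mirroring the equivalence Theorem \ref{thm:etv} of the realizable setting, the guiding principle is that once the apex $\widehat{P}(n,n,n)$ is granted, every lower triplet follows by a purely combinatorial comparison of extremal families.

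For the lower bound, observe that the classical Erd\H{o}s-Szekeres point set of $2^{n-2}$ points is realizable, so in it weak and strong $n$-gons coincide and it is weak-$n$-gon free; read as a configuration it therefore witnesses the lower bound of Conjecture \ref{conj:es-set}. This configuration carries a natural grading (by the number of up-steps in the binary labelling of its vertices) under which caps climb and cups descend through the levels, so that long caps and long cups occupy disjoint level ranges. Restricting to the middle band of levels $j\in[n-b,\,a-2]$ simultaneously destroys every $a$-cap and every $b$-cup, while weak-$n$-gon freeness is inherited automatically by any subset. A direct count of the retained levels yields $\sum_{j=n-b}^{a-2}\binom{n-2}{j}=\sum_{i=n+2-b}^{a}\binom{n-2}{i-2}$ vertices, so the lower bound holds unconditionally for every triplet.

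For the upper bound I would prove the set-theoretic analogue of Theorem \ref{thm:etv}: that $\widehat{P}(n,a,b)$ implies $\widehat{P}(n,a',b')$ whenever $a\ge a'$ and $b\ge b'$. By the mirror-reflection symmetry $S\mapsto S^{op}$ it suffices to treat the one-step cap implication $\widehat{P}(n,a,b)\Rightarrow\widehat{P}(n,a-1,b)$, where the value drops by exactly one binomial term, $\binom{n-2}{a-2}$. Since the matching lower bound is already in hand, only the upper estimate $\widehat{N}(n,a-1,b)\le \widehat{N}(n,a,b)-\binom{n-2}{a-2}$ is needed. The mechanism should be an enlargement: given a weak-$n$-gon, $(a-1)$-cap, $b$-cup free configuration $T$, adjoin a reservoir of exactly $\binom{n-2}{a-2}$ new vertices—a copy of the very top level that the $(a-1)$-cap constraint had forbidden—coloured so that the result is weak-$n$-gon, $a$-cap, $b$-cup free; applying the assumed equality $\widehat{N}(n,a,b)$ to the enlargement then forces $|T|\le \widehat{N}(n,a,b)-\binom{n-2}{a-2}$. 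Chaining the cap and cup steps collapses the entire pyramid of Figure \ref{fig:pyramid} onto its apex.

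The main obstacle is that apex itself. Since $\widehat{P}(n,n,n)$ is the open set-theoretic Erd\H{o}s-Szekeres Conjecture \ref{conj:es-set}, the reduction cannot by itself close the problem; what it delivers is that Conjecture \ref{conj:main-conj} is \emph{equivalent} to Conjecture \ref{conj:es-set}, hence provable for a given $n$ precisely when the diagonal case is, and unconditionally along the cup-cap boundary where Theorem \ref{thm:capcup} supplies the diagonal input directly. The subtlest point to make rigorous is the enlargement step, where one must verify that adjoining the reservoir creates no \emph{weak} $n$-gon—an overlap phenomenon that is invisible in the realizable proof of Theorem \ref{thm:etv} and constitutes the genuine new difficulty of the combinatorial model. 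I would therefore expect the honest deliverable to be the truncation construction together with the equivalence, leaving the full conjecture exactly as hard as Erd\H{o}s-Szekeres; the case $(n,4,n)$ treated in this paper, lying strictly between the open apex and the known boundary, is proved not through this reduction but by the direct argument of Theorem \ref{thm:main}.
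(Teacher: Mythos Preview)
The statement you are addressing is a \emph{conjecture}, not a theorem: the paper does not prove it, and indeed you correctly identify in your final paragraph that any reduction bottoms out at the open set-theoretic Erd\H{o}s--Szekeres Conjecture~\ref{conj:es-set}. So there is no ``paper's own proof'' to compare against; the paper's contribution is only the special case $(n,4,n)$, established by the direct inductive argument of Theorem~\ref{thm:main-thm}, exactly as you note in your closing sentence.

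That said, your equivalence sketch is reasonable in outline but the enlargement step is underspecified. You propose to adjoin a reservoir of $\binom{n-2}{a-2}$ vertices to an $(n,a-1,b)$-free configuration $T$ so that the result is $(n,a,b)$-free; the colouring rule for triples mixing $T$ and the reservoir is never given, and the verification that no weak $n$-gon is created---which you yourself flag as ``the genuine new difficulty''---is absent. The standard mechanism (used in the realizable proof of Theorem~\ref{thm:etv} and adaptable here) is a \emph{join} operation $S_1*S_2$ with a fixed asymmetric colouring on mixed triples, for which one can check explicitly that caps, cups, and weak $n$-gons in the join decompose into pieces controlled by the factors; your ``top level'' description gestures at something similar but does not pin it down. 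Until that colouring and the three freeness checks are written out, the implication $\widehat{P}(n,a,b)\Rightarrow\widehat{P}(n,a-1,b)$ remains a plan rather than a proof.
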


The generalization of cups-caps theorem (Theorem \ref{thm:capcup}) to arbitrary configurations is well-known (e.g. \cite{moshkovitz2014ramsey}).
\begin{theorem}[Set-theoretic cups-caps]
\label{thm:capcup-set}
For any $a, b \geq 2$, the maximum size of an $a$-cap and $b$-cup free configuration is $\binom{a+b-4}{a-2}$.
\end{theorem}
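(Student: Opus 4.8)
The plan is to prove both the upper and lower bounds simultaneously by induction on $a+b$, with the heart being the classical Erd\H{o}s--Szekeres recurrence. The key observation is that this recurrence transfers verbatim to the combinatorial model: the whole argument refers only to the linear order and the cap/cup coloring of triples, never to realizability. Write $M(a,b)$ for the maximum size of an $a$-cap and $b$-cup free configuration; the goal is $M(a,b) = \binom{a+b-4}{a-2}$.

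For the upper bound I would first establish the recurrence $M(a,b) \le M(a-1,b) + M(a,b-1)$ for $a,b \ge 3$. Given an $a$-cap, $b$-cup free configuration $S$, let $E \subseteq S$ be the set of vertices that are the ending point of some $(a-1)$-cap. Then $S \setminus E$ contains no $(a-1)$-cap, since any such cap would have its ending point in $E$; as $S \setminus E$ also has no $b$-cup, we get $|S \setminus E| \le M(a-1,b)$. The crux is to show $E$ contains no $(b-1)$-cup. Suppose $u_1 u_2 \cdots u_{b-1}$ were a $(b-1)$-cup in $E$, and pick an $(a-1)$-cap $A$ ending at $u_1$ (which exists since $u_1 \in E$); let $w$ be the second-to-last vertex of $A$, so $w < u_1$. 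The triple $w\,u_1\,u_2$ is either a cap, in which case $A$ followed by $u_2$ is an $a$-cap, or a cup, in which case $w\,u_1\,u_2 \cdots u_{b-1}$ is a $b$-cup; both contradict the freeness of $S$. Hence $E$ has no $(b-1)$-cup and $|E| \le M(a,b-1)$, proving the recurrence. The base cases are $M(2,b) = M(a,2) = 1$, since any two vertices form both a $2$-cap and a $2$-cup (the defining condition on three consecutive points is vacuous), and these match $\binom{b-2}{0} = \binom{a-2}{a-2} = 1$. The identity $\binom{a+b-5}{a-3} + \binom{a+b-5}{a-2} = \binom{a+b-4}{a-2}$ then closes the induction.

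For the matching lower bound I would construct an extremal configuration recursively. Order an $a$-cap, $(b-1)$-cup free configuration $S_1$ of size $\binom{a+b-5}{a-2}$ before an $(a-1)$-cap, $b$-cup free configuration $S_2$ of size $\binom{a+b-5}{a-3}$, and color every triple with one vertex in $S_1$ and two in $S_2$ a cap, and every triple with two vertices in $S_1$ and one in $S_2$ a cup. A short check shows that every cap of this configuration lies entirely in $S_1$, lies entirely in $S_2$, or is a cap of $S_2$ extended one vertex to the left into $S_1$, so its length is at most $\max(a-1,\,a-2,\,(a-2)+1) = a-1$; dually every cup has length at most $b-1$. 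Thus the configuration is $a$-cap and $b$-cup free, and by the same Pascal identity its size is $\binom{a+b-4}{a-2}$, with single points serving as the base cases.

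The argument is almost entirely routine, and I expect no genuine obstacle. The one place that requires care is the triple case analysis in the inductive step for $E$: one must verify that the failure to extend the chosen $(a-1)$-cap into an $a$-cap is exactly what forces a left-extension of the cup into a $b$-cup. Beyond that, the only subtlety is bookkeeping the size-$2$ convention in the base cases. Because realizability is never invoked---the recurrence and the construction speak only of the ordering and the cap/cup coloring---the classical Erd\H{o}s--Szekeres proof goes through unchanged in the combinatorial model.
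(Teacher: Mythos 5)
Your proof is correct, and both halves (recurrence and construction) go through verbatim in the combinatorial model, as you observe, because nothing in the classical argument uses realizability --- only the linear order and the fact that every triple is colored cap or cup. However, this is a genuinely different route from the paper's. The paper does not run the Erd\H{o}s--Szekeres recurrence at all: its upper bound falls out of the slope-labeling machinery of Section \ref{sec:statistic}. There, one fixes the labeling $s(e) = c(e) - 1$ (Theorem \ref{thm:label}, where $c(e)$ is the maximum length of a cap starting with $e$), defines the $\alpha$-statistic $\alpha(p) = (\alpha_1(p), \dots, \alpha_{a-2}(p))$ recording maximum cup lengths ending at $p$ with an edge of bounded label, and shows (Theorem \ref{thm:inj}) that $\alpha$ is an injection of $S$ into the grid simplex $T_{a,b}$, whose size is $\binom{a+b-4}{a-2}$; the lower bound is treated as known and cited. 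The trade-off is instructive: your argument is self-contained and more elementary, and it supplies the extremal construction explicitly (your join coloring is exactly the one the paper uses in a commented-out equivalence proof), whereas the paper's injection is not just a counting device --- the near-bijectivity of $\alpha$ when $|S|$ is close to $\binom{a+b-4}{a-2}$ is precisely what powers the $(\alpha,\beta)$-plane analysis in Sections \ref{sec:alpha-beta} and \ref{sec:laced}, which your recurrence-based proof would not provide. One small point of care in your write-up that you handled correctly: the case analysis on the triple $w\,u_1\,u_2$ is legitimate only because the model colors \emph{every} triple as cap or cup, and your restriction of the recurrence to $a, b \ge 3$ guarantees that both $w$ and $u_2$ exist.
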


Our main result is the proof of Conjecture \ref{conj:main-conj}  for the case $(n, a, b) = (n, 4, n)$.
It is the generalization of Theorem \ref{thm:main} to arbitrary configurations.
\begin{theorem}
\label{thm:main-thm}
For any $n \geq 3$, any configuration of size $\binom{n-1}{2}+2$ contains either a 4-cap, $n$-cup or a $(3, n-1)$-gon.
\end{theorem}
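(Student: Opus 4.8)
The plan is to argue by contradiction: suppose $S$ is a configuration of size $\binom{n-1}{2}+2$ with no $4$-cap, no $n$-cup, and no $(3,n-1)$-gon, and derive a contradiction. The first observation is that this hypothesis places us just past the cups--caps threshold. Indeed, by Theorem \ref{thm:capcup-set} a configuration free of a $4$-cap and an $(n-1)$-cup has size at most $\binom{4+(n-1)-4}{4-2}=\binom{n-1}{2}$, so since $|S|=\binom{n-1}{2}+2$ and $S$ has no $4$-cap, $S$ must contain an $(n-1)$-cup; in fact the surplus of two points should force many such near-maximal cups. The whole difficulty is therefore to show that, in a $4$-cap-free configuration, one cannot have this many $(n-1)$-cups without either lengthening one of them to an $n$-cup or placing a $3$-cap over its endpoints (a $(3,n-1)$-gon).

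To exploit the $4$-cap-free hypothesis I would track, for each vertex $p$, the quantity $\alpha(p)$ equal to the length of the longest cup ending at $p$, so that $1\le\alpha(p)\le n-1$. The point of restricting to the triplet $(n,4,n)$ is that caps now have length at most $3$, so the only caps that can interact with cups are single $3$-caps $xmy$; the forbidden $(3,n-1)$-gon then translates into the clean local statement that whenever $xmy$ is a $3$-cap the longest cup from $x$ to $y$ has length at most $n-2$. I would record how $\alpha$ behaves across such a $3$-cap and across the edges of a cup (this is the content of the $\alpha$-statistic and the $(\alpha,\beta)$-plane), the aim being to pin down exactly which vertices may serve as the apex $m$ of a $3$-cap lying over a long cup.

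The main construction is then an induction on $n$ that builds a pair of cups which \emph{interweave}, their successive endpoints joined by $3$-caps. Starting from an $(n-1)$-cup guaranteed above, I would use the two surplus points together with the local constraint of the previous paragraph to add one more level to a laced structure inherited from the case $n-1$; the arithmetic is calibrated so that passing from $n-1$ to $n$ costs exactly $\binom{n-1}{2}+1-\big(\binom{n-2}{2}+1\big)=n-2$ additional vertices. Once the laced cups are long enough, either two of the interleaving strands splice together into a genuine $n$-cup, or one of the connecting $3$-caps is forced to straddle a full $(n-1)$-cup, producing the forbidden $(3,n-1)$-gon; either outcome contradicts the standing assumptions. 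The base case $n=3$ is immediate, since any three points $x<y<z$ form either a $3$-cup or, if $xyz$ is a cap, a $(3,2)$-gon.

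The step I expect to be genuinely hard is the structural heart of the induction: proving that the surplus vertices must generate the interweaving $3$-caps rather than dissipating into unrelated cups, and doing the accounting tightly enough to reach the exact bound $\binom{n-1}{2}+1$ instead of a weaker polynomial estimate. This is precisely where a crude injectivity argument on the pair (longest cap, longest cup) fails, since it only yields a linear bound; the real work is the finer analysis, encoded by the $(\alpha,\beta)$-plane, of how $3$-caps can sit over near-maximal cups, together with the verification that a laced structure can always be extended by one level.
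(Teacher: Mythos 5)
Your high-level strategy coincides with the paper's: control a 4-cap free configuration via the $(\alpha,\beta)$-plane, run an induction on $n$ producing a pair of interweaved laced $(n-1)$-cups, and finish with the dichotomy that such a pair either splices into an $n$-cup or exhibits a $3$-cap over an $(n-1)$-cup, i.e.\ a $(3,n-1)$-gon (Lemma \ref{lem:inter-gon} combined with Theorem \ref{thm:main-lemma} in the paper). However, as a proof the proposal has a genuine gap, which you in fact name yourself: the inductive step is never carried out, and that step is essentially the entire content of the theorem. What is missing is not tighter ``accounting'' but a specific construction. The paper's induction deletes from $S$ the set $\Delta=\{x_1,\dots,x_{n-2}\}$ consisting of the leftmost vertex of each of the first $n-2$ rows of the $(\alpha,\beta)$-plane, so that $S'=S\setminus\Delta$ has size exactly $\binom{n-2}{2}+2$; the crux is then (i) showing $S'$ is $(n-1)$-cup free, which is not automatic for an arbitrary $(n-2)$-element deletion and in fact requires first reducing, via mirror symmetry and Lemmas \ref{lem:david} and \ref{lem:special}, to the case where $q_S$ (the leftmost right-endpoint of an $(n-1)$-cup) is not the starting point of any $(n-2)$-cup, and (ii) showing that the two interweaved laced $(n-2)$-cups of $S'$ supplied by the inductive hypothesis each extend on the left by the corresponding row-minimum $x_i$, and that the extended $(n-1)$-cups remain interweaved (via Corollary \ref{lem:4capfree} and the values $\alpha(x_i)=1$, $\alpha(p_i)=2$) and remain laced. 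Your proposal correctly computes that $n-2$ vertices must be removed per step, but offers no candidate for which vertices, no mechanism keeping the smaller configuration $(n-1)$-cup free, and no argument that the inherited structure extends; these are exactly the points where, in your words, the surplus ``dissipates.''

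A second, smaller defect: your working notion of laced cups (``successive endpoints joined by $3$-caps'') is not the definition that makes the endgame work. The paper's Definition \ref{def:laced} is quantitative: an $(n-1)$-cup from $p$ to $q$ is laced if there are cups $C_p$ ending at $p$ and $C_q$ starting at $q$ with $|C_p|+|C_q|=n-1$. This exact count is what forces the contradiction in Lemma \ref{lem:inter-gon}: when the triple $pqr$ joining the two cups is a cup rather than a cap, the pieces splice into a cup of size $|C_p|+|C_r|+1=n$. Without that arithmetic condition the dichotomy you invoke does not go through, and preserving the count across the inductive extension (the re-lacing argument tracking $\beta$ at the new left endpoints) is itself one of the delicate parts of the paper's proof.
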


\section{Structure of $a$-cap, $b$-cup free configurations}
\label{sec:statistic}

In this section, we discuss some properties of a configuration $S$ that avoids $a$-caps and $b$-cups.
They will be useful especially when the size of $S$ is close to the maximum possible value $\binom{a+b-4}{a-2}$ shown in Theorem \ref{thm:capcup-set}. 
We do not claim originality of the results in this section - it is mostly a recasting of the definitions introduced in \cite{moshkovitz2014ramsey}.

First, we define the \emph{slope labeling} of an $a$-cap free configuration $S$.
\begin{definition}
\label{def:label}
A \emph{slope labeling} $s$ of an $a$-cap free configuration $S$ is the assignment of an integer $s(xy) \in \{1, 2, \cdots, a - 2\}$ to all the edges $xy$ of $S$ so that it satisfies the following.
\begin{itemize}
	\item For any points $x < y < z$ in $S$, $s(xy) \leq s(yz)$ implies that $xyz$ is a 3-cup.
\end{itemize}
The value $s(xy)$ assigned to the edge $xy$ is the \emph{label} of $xy$.
\end{definition}

For the actual slope $s_\RR(xy) \in \RR$ of an edge in a realizable configuration $S$, both the following properties would hold.
\begin{itemize}
	\item For any points $x < y < z$ in $S$, $s_\RR(xy) > s_\RR(yz)$ implies that $xyz$ is a 3-cap.
	\item For any points $x < y < z$ in $S$, $s_\RR(xy) < s_\RR(yz)$ implies that $xyz$ is a 3-cup.
\end{itemize}

A slope labeling restricts the possible values of a `slope' (label) to a much smaller set $\{1, 2, \cdots, a - 2\}$ at the cost of giving up the first property. That is, edges of strictly decreasing labels may not form a cap (see the cup ABF in Figure \ref{fig:example}).
It is also impossible in general to assign a slope labeling that satisfies both the properties.
However, any $a$-cap free configuration has a slope labeling that satisfies the second property.

\begin{theorem}
\label{thm:label}
For any $a$-cap free configuration $S$, a slope labeling always exists.
In particular, for any edge $e$, let $c(e)$ be the maximum length of a cap starting with $e$; the function $c(e)-1$ is a slope labeling.
\end{theorem}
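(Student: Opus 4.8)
The plan is to verify that the explicit function $s(e) = c(e) - 1$ is a valid slope labeling, where $c(e)$ denotes the maximum length of a cap starting with the edge $e$. This requires checking two things: that the labels land in the correct range $\{1, 2, \ldots, a-2\}$, and that the defining implication of a slope labeling holds.

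First I would establish the range. Since $S$ is $a$-cap free, any cap has at most $a-1$ vertices, so any cap starting with an edge $e$ has at most $a-1$ vertices. Because $e$ is an edge (two vertices), the quantity $c(e)$ is at least $2$. Hence $c(e) \in \{2, 3, \ldots, a-1\}$ and $s(e) = c(e)-1 \in \{1, 2, \ldots, a-2\}$, as required. Here I should be careful that the maximum-length cap starting with $e$ is well-defined and finite, which follows because $S$ is a finite configuration; and I would note that by convention a single edge is itself a $2$-cap, guaranteeing $c(e) \geq 2$.

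Next I would verify the key implication: for points $x < y < z$ with $s(xy) \leq s(yz)$, the triple $xyz$ must be a $3$-cup. I would argue the contrapositive, or equivalently argue directly by contradiction: suppose instead that $xyz$ is a $3$-cap. Then the edge $yz$ extends to the left with the vertex $x$, so any cap of maximum length starting with $yz$ can be prepended with $x$ to yield a cap starting with $xy$ that is exactly one vertex longer. Concretely, if $C$ is a maximum cap starting with $yz$, then $xC$ is a cap starting with $xy$, giving $c(xy) \geq c(yz) + 1$. This forces $s(xy) = c(xy) - 1 \geq c(yz) = s(yz) + 1 > s(yz)$, contradicting the hypothesis $s(xy) \leq s(yz)$. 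Therefore $xyz$ cannot be a $3$-cap, and since every triple is assigned either cap or cup, it is a $3$-cup.

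The main obstacle, and the step deserving the most care, is the claim that $xyz$ being a $3$-cap lets one genuinely extend a maximum cap starting at $yz$ into a strictly longer cap starting at $xy$. The subtlety is that prepending $x$ to a cap $C = y z w \cdots$ yields a cap precisely when the first three vertices $xyz$ form a $3$-cap, which is exactly our assumption; the remaining consecutive triples of $xC$ are inherited from $C$ and are unchanged. Thus the combinatorial definition of a cap (every three \emph{consecutive} points form a $3$-cap) is respected, and no global convexity condition is needed. Once this extension is justified cleanly from the definitions, the inequality $c(xy) \geq c(yz)+1$ and the conclusion follow immediately.
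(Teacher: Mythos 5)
Your proof is correct and takes essentially the same approach as the paper: both argue by contradiction that a $3$-cap $xyz$ with $s(xy) \leq s(yz)$ would let a maximum cap starting with $yz$ be prepended by $x$, forcing $c(xy) \geq c(yz) + 1$ and contradicting the hypothesis. Your additional care in verifying the label range and in noting that prepending only affects the first consecutive triple are points the paper treats as immediate, but the substance is identical.
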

\begin{proof}
For any edge $e$, its assigned label $i$ is in between 1 and $a-2$ inclusive by definition.
Assume by contrary that there is a 3-cap $xyz$ in $S$
with the label $s$ of $xy$ less than or equal to the label $t$ of $yz$.
As the label of edge $yz$ is $t$, there is a cap $C$ of size $t + 1$ starting with $yz$.
Since $xyz$ forms a cap, the cap $C$ extends to a cap $xC$ of size $t + 2$ that starts with $xy$.
By the definition of $s$, we have $s + 1 \geq t + 2$, which contradicts the hypothesis $s \leq t$.
\end{proof}

\begin{center}
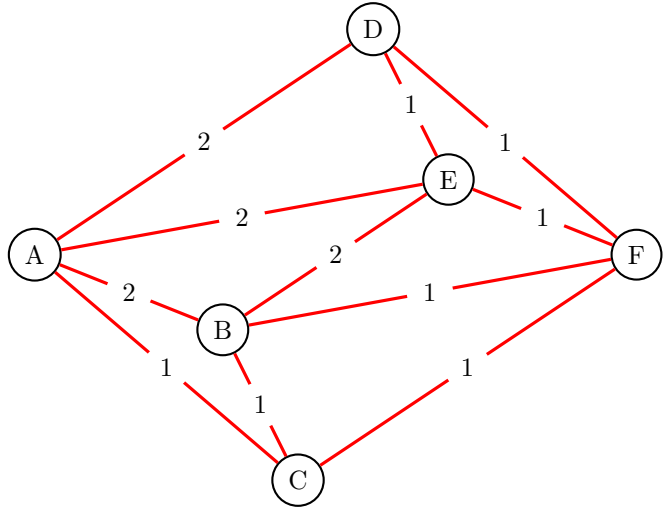
\begin{figure}[h]
\centering 
\begin{tikzpicture}
\begin{scope}[every node/.style={circle,thick,draw}]
    \node (A) at (-4,0) {A};
    \node (B) at (-1.5,-1) {B};
    \node (C) at (-0.5,-3) {C};
    \node (D) at (0.5,3) {D};
    \node (E) at (1.5,1) {E};
    \node (F) at (4,0) {F} ;
\end{scope}

\begin{scope}[>={Stealth[black]},
              every node/.style={fill=white,circle},
              every edge/.style={draw=red,very thick}]
    \path [-] (A) edge node {$2$} (B);
    \path [-] (A) edge node {$1$} (C);
    \path [-] (A) edge node {$2$} (D);
    \path [-] (A) edge node {$2$} (E);
    \path [-] (B) edge node {$1$} (C);
    \path [-] (B) edge node {$2$} (E);
    \path [-] (B) edge node {$1$} (F);
    \path [-] (C) edge node {$1$} (F);
    \path [-] (D) edge node {$1$} (E); 
    \path [-] (D) edge node {$1$} (F); 
    \path [-] (E) edge node {$1$} (F); 
\end{scope}
\end{tikzpicture}
\caption{
A $4$-cap and $4$-cup free realizable configuration with the slope labeling of Theorem \ref{thm:label}. 
The labels of the edges AF, BD, CD and CE are 1, 2, 2 and 2 respectively.
}
\label{fig:example}
\end{figure}
\end{center}

For the rest of the paper, we will fix a large $a$-cap free configuration $S$
and a slope labeling of $S$ so that there is no confusion in the label of an arbitrary edge of $S$.
The following lemma is immediate.
\begin{lemma}
For any $a$-cap free configuration $S$ with slope labeling $s$, the restriction of $s$ to an arbitrary subset $S'$ of $S$ is a slope labeling of $S'$.
\end{lemma}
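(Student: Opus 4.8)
The plan is to unwind the definitions and observe that every ingredient of Definition \ref{def:label} is inherited verbatim by a subconfiguration. First I would fix the subset $S' \subseteq S$ and recall that $S'$ carries the induced structure of a configuration: its linear ordering is the restriction of $<$ on $S$, and the cap/cup type of each size-3 subset of $S'$ is exactly the one assigned in $S$. Since any $a$-cap in $S'$ is in particular an $a$-cap in $S$, and $S$ is $a$-cap free, the subconfiguration $S'$ is $a$-cap free as well; hence the notion of a slope labeling of $S'$ makes sense, and I am free to ask whether $s|_{S'}$ is one.

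Next I would check the two requirements. The range condition is immediate: for every edge $xy$ of $S'$ the value $s(xy)$ already lies in $\{1, 2, \ldots, a-2\}$, since $xy$ is also an edge of $S$ and $s$ is a slope labeling there. For the defining implication, I would take arbitrary points $x < y < z$ of $S'$ with $s(xy) \le s(yz)$. These are points of $S$ in the same order, the edges $xy$ and $yz$ carry the same labels under $s|_{S'}$ as under $s$, and the cap/cup type of $xyz$ agrees in $S'$ and in $S$. Applying the slope-labeling property of $s$ on $S$ to the triple $x < y < z$ therefore shows that $xyz$ is a $3$-cup, which is precisely what is required in $S'$.

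There is essentially no obstacle here, and this is why the statement is labeled as immediate: the slope-labeling property is a condition universally quantified over triples of vertices, so passing to a subset can only delete constraints, never create new ones. The only point worth stating explicitly is that restriction changes neither the label of any edge, nor the ordering, nor any $3$-cap/$3$-cup type, so each surviving constraint is satisfied in $S'$ for exactly the same reason it was satisfied in $S$.
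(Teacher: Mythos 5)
Your proof is correct, and it matches the paper's treatment: the paper states this lemma as ``immediate'' with no written proof, precisely because, as you observe, the slope-labeling property is a universally quantified condition over triples that only loses constraints under restriction, while the ordering, labels, and $3$-cap/$3$-cup types are all preserved. Your explicit verification (range condition, inherited implication, and the preliminary observation that $S'$ is still $a$-cap free) is exactly the intended argument.
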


Assume any $a$-cap, $b$-cup free configuration $S$ with a slope labeling $s$ by integers from 1 to $a-2$. 
For each point $p$ of $S$, we assign its \emph{$\alpha$-statistic} which is a tuple $\alpha(p) = (\alpha_1(p), \alpha_2(p), \cdots, \alpha_{a-2}(p))$ of natural numbers.

\begin{definition}
Fix any $a$-cap, $b$-cup free configuration $S$ with a slope labeling.
For all $p \in S$ and $1 \leq i \leq a - 2$, define the integer $\alpha_i(p)$ as maximum length of a cup that ends with the point $p$ and an edge of slope $\leq i$.
If there is no such cup, then let $\alpha_i(p) = 1$.
In particular, the rightmost point $p$ has the value $\alpha^i(p) = 1$ for any $i$.
The \emph{$\alpha$-statistic} of a point $p$ in $S$ is 
the tuple $\alpha(p) = (\alpha_1(p), \alpha_2(p), \cdots, \alpha_{a-2}(p))$.
\end{definition}

\begin{definition}
For any integers $a, b \geq 2$, define the grid simplex
$$T_{a, b} := \{(x_1, x_2, \cdots, x_{a-2}) \in \NN^{a - 2} \colon 1 \leq x_1 \leq \cdots \leq x_{a-2} \leq b - 1\}.$$
\end{definition}

\begin{theorem}
\label{thm:inj}
For any $a$-cap, $b$-cup free configuration $S$ with a fixed slope labeling $s$, the $\alpha$-statistic of $S$ satisfies the following.
\begin{enumerate}
	\item For any point $p$, $1 \leq \alpha_1(p) \leq \cdots \leq \alpha_{a-2} (p) \leq b - 1$. Consequently, the $\alpha$-statistic $\alpha$ is a map from $S$ to $T_{a, b}$.
	\item For any two points $x < y$ connected by an edge of label $i$, $\alpha_i(x) < \alpha_i(y)$ holds. So in particular, $\alpha$ is injective.
\end{enumerate}
\end{theorem}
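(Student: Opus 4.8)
The plan is to establish the two claimed properties of the $\alpha$-statistic in turn, deducing injectivity as a consequence of the second property together with the fact that any two distinct points are joined by an edge carrying some label.

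For property (1), I would first argue the monotonicity $\alpha_1(p) \leq \cdots \leq \alpha_{a-2}(p)$ directly from the definition. Since $\alpha_i(p)$ is the maximum length of a cup ending at $p$ whose final edge has label $\leq i$, enlarging $i$ only relaxes the constraint on the final edge, so the set of admissible cups grows (or stays the same) as $i$ increases; hence the maximum length is nondecreasing in $i$. The lower bound $\alpha_1(p) \geq 1$ holds because the single point $p$ counts as a $1$-cup (and the convention sets $\alpha_i(p)=1$ when no longer cup exists). For the upper bound $\alpha_{a-2}(p) \leq b-1$: the quantity $\alpha_{a-2}(p)$ is the length of the longest cup ending at $p$ with no restriction on the final label, i.e. the longest cup ending at $p$ whatsoever. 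Since $S$ is $b$-cup free, every cup has length at most $b-1$, giving the bound. Thus $\alpha(p) \in T_{a,b}$.

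For property (2), suppose $x < y$ are joined by an edge of label $i$; I must show $\alpha_i(x) < \alpha_i(y)$. The key idea is that any cup witnessing $\alpha_i(x)$ can be extended by appending the edge $xy$. Let $C$ be a longest cup ending at $x$ whose final edge has label $\leq i$, so $|C| = \alpha_i(x)$. I claim $Cy$ is a cup ending at $y$ whose final edge $xy$ has label exactly $i \leq i$. To verify $Cy$ is a cup, I only need to check that the last three points $x' x y$ form a $3$-cup, where $x'x$ is the final edge of $C$. By the slope labeling property (Definition \ref{def:label}), since the label of $x'x$ is $\leq i = s(xy)$, i.e. $s(x'x) \leq s(xy)$, the triple $x'xy$ is a $3$-cup. (When $C$ is the trivial $1$-cup at $x$, the extension $xy$ is a $2$-cup automatically.) Hence $Cy$ is an admissible cup for $\alpha_i(y)$ of length $\alpha_i(x)+1$, giving $\alpha_i(y) \geq \alpha_i(x) + 1 > \alpha_i(x)$.

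Injectivity then follows immediately: given distinct points $x < y$, the edge $xy$ has some label $i \in \{1,\dots,a-2\}$, and property (2) yields $\alpha_i(x) < \alpha_i(y)$, so $\alpha(x) \neq \alpha(y)$. The main subtlety to handle carefully is the boundary case in property (2) where the witnessing cup $C$ for $\alpha_i(x)$ is a single point, and ensuring the slope-labeling condition is invoked with the correct inequality direction; everything else is a direct unwinding of the definitions, so I do not anticipate a serious obstacle.
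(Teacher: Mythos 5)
Your proposal is correct and follows essentially the same route as the paper: property (1) is read off from the definitions, and property (2) is proved by extending the cup witnessing $\alpha_i(x)$ to the right along the edge $xy$, using the slope-labeling condition $s(x'x) \leq s(xy)$ to verify the extension is still a cup. Your write-up is simply a more detailed version (including the trivial-cup boundary case) of the paper's one-line argument.
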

\begin{proof}
We check the two conditions the $\alpha$-statistic has to satisfy.
The inequality $1 \leq \alpha_1(p) \leq \cdots \leq \alpha_{a-2} (p) \leq b - 1$ for any point $p$ follows directly from the definitions.
The cup of length $\alpha_i(x)$ that ends with the vertex $x$ and an edge of label $\leq i$ can be extended to the right with the edge $xy$ of label $i$, so that $\alpha_i(x) + 1 \leq \alpha_i(y)$ in any of the two given definitions of $\alpha_i$.
\end{proof}
 
Note that the size of $T_{a, b}$ is $\binom{a+b-4}{a-2}$.
For a quick proof, note that any $(x_1, x_2, \cdots, x_{a-2}) \in T_{a, b}$ corresponds bijectively
to an arbitrary subset $\{x_i + i - 1 \colon 1 \leq i \leq a-2\}$ of $\{1, 2, \cdots, a + b - 4 \}$ of size $a-2$.
Consequently, $\alpha$ being injective immediately proves Theorem \ref{thm:capcup-set} that $|S| \leq \binom{a+b-4}{a-2}$.
Therefore, if the size of $S$ is nearly equal to the maximum size $\binom{a+b-4}{a-2}$,
we can expect the map $\alpha$ to be almost bijective with some exceptional `holes' in $T_{a, b}$.

Let us end this section with a remark on the $\alpha$-statistic of the mirror configuration.
\begin{definition}
For any $a$-cap, $b$-cup free configuration $S$ and its slope labeling $s$, define the mirror reflection $s^\op$ of the slope labeling $s$ as the following.
$$s^{\op}(yx) = a - 1 -  s(xy)$$
\end{definition}
We leave it as an exercise to show that $s^\op$ is a proper slope labeling of $S^\op$.
\begin{lemma}
The mirror reflection $s^\op$ of a slope labeling $s$ of $S$ is a slope labeling of $S^\op$.
\end{lemma}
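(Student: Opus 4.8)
The plan is to verify directly that $s^\op$ meets the two requirements of a slope labeling in Definition \ref{def:label}, translating every statement about $S^\op$ back into a statement about $S$ through the order reversal. First I would record that $S^\op$ is itself $a$-cap free: the mirror reflection preserves the cap/cup assignment of every triple and only reverses the linear order, so any $a$-cap of $S^\op$ is the same vertex set as an $a$-cap of $S$, and $a$-cap freeness transfers from $S$ to $S^\op$. This confirms that a slope labeling of $S^\op$ should indeed take values in $\{1, 2, \dots, a-2\}$, which is the codomain we must check for $s^\op$.

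For the range requirement I would simply note that $s(xy) \in \{1, \dots, a-2\}$ forces $s^\op(yx) = a - 1 - s(xy) \in \{1, \dots, a-2\}$ as well (the value $a-2$ maps to $1$ and the value $1$ maps to $a-2$), so $s^\op$ assigns an admissible label to every edge of $S^\op$.

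The crux is the defining implication. Fix three vertices appearing in $S^\op$ as $u <^\op v <^\op w$; by reversal these are exactly three vertices $x < y < z$ of $S$, with $u = z$, $v = y$, $w = x$. Unwinding the definition $s^\op(qp) = a - 1 - s(pq)$ (valid whenever $p < q$ in $S$) gives $s^\op(uv) = a - 1 - s(yz)$ and $s^\op(vw) = a - 1 - s(xy)$. Hence the slope-labeling hypothesis $s^\op(uv) \leq s^\op(vw)$ is equivalent to $s(xy) \leq s(yz)$. Since $s$ is a slope labeling of $S$, this forces $xyz$ to be a 3-cup in $S$; because the mirror reflection leaves the cup assignment of the triple $\{x,y,z\}$ unchanged, the very same triple is a 3-cup in $S^\op$, i.e.\ $uvw$ is a 3-cup in $S^\op$. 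That is precisely the required conclusion.

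I expect no serious obstacle: the whole argument is a bookkeeping computation, and the only point demanding care is tracking the order reversal so that $s^\op(uv) \leq s^\op(vw)$ is dualized to $s(xy) \leq s(yz)$ rather than to the reversed inequality. The sign flip built into $s^\op(yx) = a - 1 - s(xy)$ is exactly what converts the ``$\leq$'' hypothesis phrased in $S^\op$ into the matching ``$\leq$'' hypothesis in $S$, letting the cup-forming property of $s$ be invoked verbatim.
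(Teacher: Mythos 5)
Your proof is correct, and it is exactly the intended argument: the paper itself gives no proof of this lemma (it is explicitly left as an exercise), and the routine verification is precisely the order-reversal bookkeeping you carry out. In particular, you correctly handle the one point that requires care --- the sign flip in $s^\op(yx) = a - 1 - s(xy)$ turning the hypothesis $s^\op(uv) \leq s^\op(vw)$ in $S^\op$ into $s(xy) \leq s(yz)$ in $S$ --- and you rightly note that $S^\op$ is $a$-cap free so that a slope labeling of $S^\op$ is even well-posed.
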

\begin{remark}
\label{rem:4cap-mirror}
We use the mirror reflection of a labeling extensively to exploit the symmetry without loss of generality. 
For example, take an arbitrary 4-cap free configuration $S$. Its slope labeling $s$ has values in $\{1, 2\}$. So for any edge $xy$, if its label $s(xy)$ is 1 (resp. 2) then the labeling $s^\op(yx)$ of its reflection is 2 (resp. 1).
Therefore, if the statement we want to show regarding $S$ is invariant under reflection,
then we can safely assume that the edge connecting $x$ and $y$ is labeled 1 by reflecting $S$ if necessary.
\end{remark}

\section{The $(\alpha, \beta)$-plane}
\label{sec:alpha-beta}
In this section, we focus our attention on an arbitrary 4-cap, $n$-cup free configuration $S$ with a fixed slope labeling and $\alpha$-statistic.

\begin{center}
\begin{figure}[h]
\centering 
\begin{tikzpicture}[scale=1]

\draw[step=2cm,gray] (-0.5,-0.5) grid (6.5,6.5);
\draw[-stealth,thick] (-0.5,0)--(6.5,0) node[below right]{$\alpha(p)$}; 
\draw[-stealth,thick] (0,-0.5)--(0,6.5) node[above left]{$\beta(p)$};
\foreach \i in {1,2,3}
\draw[very thick] 
(\i+\i,.1)--(\i+\i,-.1) node[below,fill=white]{\i}
(-.1,\i+\i) node[left,fill=white]{\i}--(.1,\i+\i) ;

\begin{scope}[every node/.style={circle,thick,draw,fill=white}]
    \node (A) at (2, 2) {A};
    \node (B) at (2, 4) {B};
    \node (C) at (4, 4) {C};
    \node (D) at (2, 6) {D};
    \node (E) at (4, 6) {E};
    \node (F) at (6, 6) {F};
\end{scope}

\begin{scope}[>={Stealth[black]},
              every node/.style={circle},
              every edge/.style={draw=gray, thick}]
    \path [-] (A) edge node {$2$} (B);
    \path [-] (A) edge node {$1$} (C);
    \path [-] (A) edge [bend left = 40] node {$2$} (D);
    \path [-] (B) edge node {$2$} (D);
    \path [-] (B) edge node {$1$} (C);
    \path [-] (B) edge node {$2$} (E);
    \path [-] (C) edge node {$2$} (E);
    \path [-] (C) edge node {$1$} (F);
    \path [-] (D) edge node {$1$} (E); 
    \path [-] (D) edge [bend left = 40] node {$1$} (F); 
    \path [-] (E) edge node {$1$} (F); 
\end{scope}

\end{tikzpicture}
\caption{
The $(\alpha, \beta)$-plane of the 4-cap, 4-cup free configuration in Figure \ref{fig:example} with  the slope labels of some edges.
}
\label{fig:example-stat}
\end{figure}
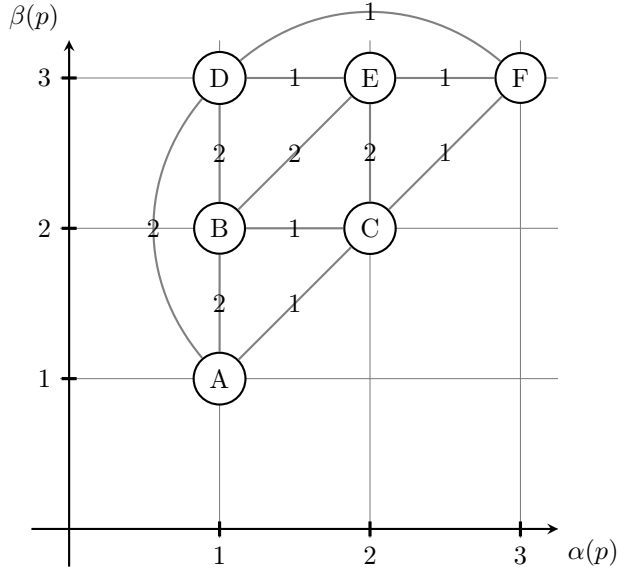
\end{center}

In our configuration $S$, we only have edges of label 1 and 2, and the following is an immediate consequence of Definition \ref{def:label}.
We will use this extensively without further mentions.
\begin{corollary}
In an arbitrary 4-cap free configuration with slope labeling, any edge of label 1 extends a cup to the left.
That is, for any edge $pq$ of label 1 and a cup $C$ that starts with $q$, the sequence $pC$ is also a cup.

Likewise, any edge of label 2 extends a cup to the right.
That is, for any edge $pq$ of label 2 and a cup $C$ that ends with $p$, the sequence $Cq$ is also a cup.
\end{corollary}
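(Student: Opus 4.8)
The plan is to derive both halves of the corollary from a single application of the defining property of a slope labeling (Definition \ref{def:label}), exploiting the fact that in a $4$-cap free configuration every label lies in $\{1, 2\}$. The guiding observation is that label $1$ is always $\leq$ any label, and label $2$ is always $\geq$ any label, so the hypothesis $s(xy) \leq s(yz)$ of the slope labeling property is automatically met in the two relevant situations.

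For the first claim I would write $C = q\,x_2\cdots x_m$ for the cup starting at $q$, treating the degenerate case $|C| = 1$ separately (there $pC = pq$ is a $2$-cup and nothing needs checking). Since $C$ is already a cup, every triple of consecutive vertices of $pC$ is a $3$-cup \emph{except} possibly the triple $p\,q\,x_2$ involving the newly prepended vertex $p$. To handle this triple I would note that $s(pq) = 1$, so $s(pq) = 1 \leq s(qx_2)$ because $1$ is the minimum possible label. The defining implication of the slope labeling then forces $p\,q\,x_2$ to be a $3$-cup, and hence $pC$ is a cup.

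For the second claim I would argue symmetrically, and the cleanest route is to invoke the mirror reflection. An edge $pq$ of label $2$ in $S$ corresponds under $s^{\op}(yx) = 3 - s(xy)$ (here $a = 4$) to an edge of label $1$ in $S^{\op}$, while a cup ending at $p$ in $S$ becomes a cup starting at $p$ in $S^{\op}$; thus the second statement is exactly the first statement applied to $S^{\op}$ with its slope labeling $s^{\op}$. Alternatively one can repeat the direct computation: writing $C = x_1\cdots x_{m-1}\,p$, the only new consecutive triple in $Cq$ is $x_{m-1}\,p\,q$, and since $s(x_{m-1}p) \leq 2 = s(pq)$ (as $2$ is the maximum possible label) the slope labeling property again yields that $x_{m-1}\,p\,q$ is a $3$-cup, so $Cq$ is a cup.

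There is no substantial obstacle here; the statement is an immediate unwinding of the definitions once the extremality of the labels $1$ and $2$ is noticed. The only points requiring minor care are the degenerate $1$-cup case and, if the symmetry route is taken, verifying that the mirror reflection sends ``ending at $p$'' to ``starting at $p$'' and carries cups to cups, both of which are already recorded in the preceding lemma on $s^{\op}$.
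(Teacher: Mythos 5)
Your proof is correct and is exactly the ``immediate'' argument the paper has in mind: the paper states this corollary without proof as a direct consequence of Definition~\ref{def:label}, and your unwinding---labels lie in $\{1,2\}$, so $s(pq)=1\leq s(qx_2)$ (resp.\ $s(x_{m-1}p)\leq 2 = s(pq)$) forces the single new consecutive triple to be a $3$-cup---is precisely that consequence. The degenerate $1$-cup case and the optional mirror-reflection route are fine but not needed beyond what you already wrote.
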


Also, we define aliases for the $\alpha$-statistic of $S$.
\begin{definition}
For an arbitrary 4-cap, $n$-cup free configuration $S$ with a fixed slope labeling and $\alpha$-statistic $p \mapsto (\alpha_1(p), \alpha_2(p))$, define aliases 
$\alpha = \alpha_1$ and $\beta = \alpha_2$.

Thus, $\alpha(p) = \alpha_1(p)$ is the maximum length of a cup that ends with the vertex $p$ and an edge of label 1 ($\alpha(p) = 1$ if there is no such cup).
The value $\beta(p) = \alpha_2(p)$ is the maximum length of any cup that ends with the vertex $p$.
\end{definition}

The $\alpha$-statistic $\alpha(p) = (\alpha(p), \beta(p))$
maps $S$ to the triangular grid set 
$$T_{4, n} := \{(a, b) \in \NN^2 \colon 1 \leq a \leq b \leq n - 1 \}$$ 
injective by Theorem \ref{thm:inj}.
With this, if the size of $S$ is $|T_{4,n}| - k = \binom{n-1}{2} - k$ where $k$ is small, 
it helps to identify $S$ with the grid points $T_{4,n}$ with $k$ missing holes (see Figure \ref{fig:example-stat}).
Call such a diagram an \emph{$(\alpha, \beta)$-plane} of the 4-cap, $n$-cup free configuration $S$.
The following is a direct consequence of Theorem \ref{thm:inj},
and we will use it extensively without mentioning. 

\begin{corollary}
\label{lem:4capfree}
Assume an arbitrary 4-cap, $n$-cup free configuration $S$.

For any points $p, q$ with $\beta(p) = \beta(q)$,
\begin{itemize}
	\item $p$ and $q$ are always connected with an edge of label 1
	\item and $p < q$ if and only if $\alpha(p) < \alpha(q)$.
\end{itemize}
Consequently, in an $(\alpha, \beta)$-plane any horizontal edge is labeled 1, 
and each column is sorted in the increasing order of vertices from left to right.

Likewise, for any points $p, q$ with $\alpha(p) = \alpha(q)$,
\begin{enumerate}
	\item $p$ and $q$ are always connected with an edge of label 2
	\item and $p < q$ if and only if $\beta(p) < \beta(q)$.
\end{enumerate}
Consequently, in an $(\alpha, \beta)$-plane any vertical edges is labeled 2, 
and each row is sorted in the increasing order of vertices from bottom to top.
\end{corollary}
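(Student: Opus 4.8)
The plan is to read off the entire statement from Theorem \ref{thm:inj} specialized to $a = 4$, $b = n$, using two structural facts: that the slope labeling of a $4$-cap free configuration takes values only in $\{1, \dots, a-2\} = \{1, 2\}$ (Theorem \ref{thm:label}), and that in the aliases $\alpha = \alpha_1$, $\beta = \alpha_2$ the monotonicity clause of Theorem \ref{thm:inj}(2) reads as follows: a label-$1$ edge strictly increases $\alpha$ with the vertex order, while a label-$2$ edge strictly increases $\beta$.

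First I would treat the $\beta(p) = \beta(q)$ claim. For distinct $p, q$, the pair forms an edge whose label is $1$ or $2$; writing $x$ for the smaller and $y$ for the larger vertex, a label-$2$ edge would give $\beta(x) = \alpha_2(x) < \alpha_2(y) = \beta(y)$ by Theorem \ref{thm:inj}(2), contradicting $\beta(p) = \beta(q)$. Hence the edge is labeled $1$, which is the first bullet. For the equivalence, I apply Theorem \ref{thm:inj}(2) to this label-$1$ edge in both possible orientations: $p < q$ forces $\alpha(p) < \alpha(q)$ and $q < p$ forces $\alpha(q) < \alpha(p)$, so by trichotomy of the linear order, $p < q$ if and only if $\alpha(p) < \alpha(q)$. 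The geometric ``consequently'' is then a direct rephrasing: points of equal $\beta$ lie at one horizontal level of the $(\alpha, \beta)$-plane, the edges joining them are labeled $1$, and their vertex order increases with $\alpha$ along that level.

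The $\alpha(p) = \alpha(q)$ claim is perfectly symmetric. I would obtain it either by rerunning the previous paragraph with the roles of the two labels (and of $\alpha_1, \alpha_2$) interchanged, or, more economically, by invoking the mirror reflection of Remark \ref{rem:4cap-mirror}: passing to $S^{\op}$ swaps labels $1 \leftrightarrow 2$ and exchanges the roles of $\alpha$ and $\beta$, so the already-proved horizontal statement for $S^{\op}$ becomes exactly the vertical statement for $S$.

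I expect no real obstacle, as the corollary is essentially Theorem \ref{thm:inj} for $a = 4$ read in coordinates. The only points needing care are (i) recording explicitly that the label set is exactly $\{1, 2\}$, so that excluding one label forces the other, and (ii) supplying both directions of the ``if and only if'' via trichotomy rather than only the immediate forward direction. Translating the abstract inequalities into the row/column language of the $(\alpha, \beta)$-plane is then purely descriptive and carries no mathematical content.
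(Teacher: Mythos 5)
Your primary argument is correct and is essentially the paper's own: the paper gives no explicit proof, stating only that the corollary is a direct consequence of Theorem \ref{thm:inj}, and your derivation (label set is exactly $\{1,2\}$, so excluding one label by the strict monotonicity of Theorem \ref{thm:inj}(2) forces the other; then trichotomy of the linear order gives both directions of the equivalence) is exactly that intended deduction, carried out symmetrically for the two halves.

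One caution: your ``more economical'' alternative via mirror reflection is not sound, and you should rely on the symmetric rerun instead. Passing to $S^{\op}$ does swap the labels $1 \leftrightarrow 2$ (Remark \ref{rem:4cap-mirror}), but it does \emph{not} exchange the roles of $\alpha$ and $\beta$. The $\alpha$-statistic of $S^{\op}$ is defined through cups \emph{ending} at a point in the reversed order, i.e.\ cups \emph{starting} at that point in $S$, with a label constraint on their first edge. Concretely, $\alpha_2^{S^{\op}}(p)$ is the maximum length of a cup of $S$ starting at $p$, and $\alpha_1^{S^{\op}}(p)$ is the maximum length of a cup of $S$ starting at $p$ with first edge labeled $2$; neither equals $\alpha(p)$ or $\beta(p)$. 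So the horizontal statement applied to $S^{\op}$ translates into a claim about these ``starting'' statistics of $S$, not into the vertical statement about $\alpha$ and $\beta$ that you need. Since your first route (interchanging the roles of the two labels and of $\alpha_1, \alpha_2$ in the same argument) is complete and correct, the proof stands once the reflection shortcut is dropped.
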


We will now use the concept of the $(\alpha, \beta)$-plane to introduce and motivate several key notions for our proof.
Take a 4-cap, $n$-cup free configuration $S$.
For simplicity, assume at first that $S$ of size $\binom{n}{2}$ (the maximum possible size), so that every location in the $(\alpha, \beta)$-plane is occupied.
Then we can identify a point $p$ in $S$ with its location $(\alpha(p), \beta(p))$ in the $(\alpha, \beta)$-plane.

Take any $1 \leq k \leq n-1$ and consider the path of vertices 
$$C_k = (1,k), (2,k), \dots, (k-1, k), (k,k), (k, k+1), \dots, (k, n-1)$$ 
in the $(\alpha, \beta)$-plane.
By Corollary \ref{lem:4capfree}, the first $k-1$ edges in this path are labeled 1, and the last $n-k-1$ edges are labeled 2, so these points must form an $(n-1)$-cup $C_k$ from start $p_k = (1, k)$ to end $q_k = (k, n - 1)$.
For any pair of $(n-1)$-cups $C_{k}$ and $C_{l}$ with $k < l$, the ordering of their endpoints is then $p_{k} < p_{l} \leq q_{k} < q_{l}$.
Motivated by this, we introduce the following definition of \emph{interweaved cups}.
\begin{definition}
\label{def:inter-laced}
Two cups $C_1$ and $C_2$ running from $p$ to $r$ and $q$ to $s$ respectively are \emph{interweaved} if $p < q \leq r < s$ holds.
\end{definition}
So if $|S| = \binom{n}{2}$, we get $n-1$ different $(n-1)$-cups $C_1, \dots, C_{n-1}$ all mutually interweaved.
Moreover, for each cup $C_k$ 
we have the $k$-cup $D_k$ of vertices 
$$D_k = (1,1), (1,2), ..., (1,k)$$
that ends with the left endpoint $p_k$ of $C_k$ and the $(n-k)$-cup $E_k$ of vertices 
$$E_k = (k, n-1), (k+1, n-1), ..., (n-1, n-1)$$
that starts with the right endpoint $q_k$ of $C_k$. 
Motivated by this, we introduce the following notion of \emph{laced cups}.
\begin{definition}
\label{def:laced}
A $(n-1)$-cup $C$ from $p$ to $q$ is \emph{laced} if there exists a cup $C_p$ that ends with $p$, and a cup $C_q$ that starts with $q$, so that $|C_p| + |C_q| = n - 1$.\footnote{One might expect the sum to be $n$ given the above motivation, but it turns out that $n-1$ is sufficient.}
\end{definition}
So if $|S| = \binom{n}{2} = \binom{n-1}{2} + (n - 1)$, then all of our mutually interweaved $(n-1)$-cups $C_k$ are laced.
The importance of this concept will be shown in Lemma \ref{lem:inter-gon} which shows that if $S$ contains just \emph{two} interweaved laced $(n-1)$-cups, then $S$ contains a $(3,n-1)$-gon.
We expect that for $|S| = \binom{n-1}{2} + d$ with any $1 \leq d \leq n - 1$, we can find $d$ mutually interweaved laced $(n-1)$-cups (Conjecture \ref{conj:interweaved}). 
Roughtly, this amounts to saying that an additional hole in the $(\alpha, \beta)$-plane only destroys one of the mutually interweaved laced $(n-1)$-cups.

We show this for $d = 2$ (Theorem \ref{thm:main-lemma}) and this is sufficient to show the main theorem that for $|S| = \binom{n-1}{2} + 2$ we can always find an $n$-gon (Theorem \ref{thm:main-thm}).
The proof for case $d=2$ requires a delicate inductive argument and several lemmas about interweaved laced $(n-1)$-cups; we now turn to stating and proving those lemmas.
Before doing so, we briefly remark that the concept of interweaved cups and of laced cups are symmetric under mirror reflection.
\begin{lemma}
\label{lem:laced-mirror}
Two cups $C_1, C_2$ are interweaved in configuration $S$ if and only if $C_2^\op$ and $C_1^\op$ are interweaved in $S^\op$.
An $(n-1)$-cup $C$ is laced in configuration $S$ 
if and only if $C^\op$ is laced in $S^\op$.
\end{lemma}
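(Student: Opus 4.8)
The statement to prove is Lemma~\ref{lem:laced-mirror}, which asserts two symmetry properties under mirror reflection: first, that two cups $C_1, C_2$ are interweaved in $S$ if and only if $C_2^\op, C_1^\op$ are interweaved in $S^\op$; and second, that an $(n-1)$-cup $C$ is laced in $S$ if and only if $C^\op$ is laced in $S^\op$.

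The plan is to unwind the definitions directly, using the fact that the mirror reflection $S^\op$ reverses the linear order while preserving which triples are cups. The crucial observation I would establish first is that if $C$ is a cup in $S$ running from $p$ to $q$ (so $p < q$), then $C^\op$ is a cup in $S^\op$ running from $q$ to $p$, since reflection reverses the order and a 3-cup stays a 3-cup by the definition of $S^\op$. In particular, the starting point and ending point swap roles under reflection: the start of $C^\op$ is the end $q$ of $C$, and the end of $C^\op$ is the start $p$ of $C$.

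For the first claim, I would take $C_1$ running from $p$ to $r$ and $C_2$ from $q$ to $s$, so being interweaved means $p < q \leq r < s$ in $S$. In $S^\op$ the order is reversed, so the same four points satisfy $s <^\op r \leq^\op q <^\op p$, where $<^\op$ denotes the order of $S^\op$. Now $C_2^\op$ runs from $s$ to $q$ and $C_1^\op$ runs from $r$ to $p$ in $S^\op$; writing out the interweaving condition for the ordered pair $(C_2^\op, C_1^\op)$ asks exactly for $s <^\op r \leq^\op q <^\op p$, which is what we have. The converse is identical since $(S^\op)^\op = S$ and $(C^\op)^\op = C$. The only subtle point is bookkeeping the reversal of roles, which explains why the lemma swaps the \emph{order} of the two cups (writing $C_2^\op$ before $C_1^\op$), and I would be careful to track the weak inequality $\leq$ correctly under reversal.

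For the second claim about lacedness, I would again just translate the definition. If $C$ from $p$ to $q$ is laced in $S$, there are cups $C_p$ ending at $p$ and $C_q$ starting at $q$ with $|C_p| + |C_q| = n-1$. Reflecting, $C_p^\op$ is a cup in $S^\op$ that \emph{starts} at $p$ (the former ending point becomes the starting point), and $C_q^\op$ is a cup that \emph{ends} at $q$. Since $C^\op$ runs from $q$ to $p$ in $S^\op$, its left endpoint is $q$ and right endpoint is $p$; thus $C_q^\op$ ends at the left endpoint $q$ of $C^\op$ and $C_p^\op$ starts at the right endpoint $p$ of $C^\op$, with $|C_q^\op| + |C_p^\op| = |C_q| + |C_p| = n-1$. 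This is precisely the lacedness condition for $C^\op$. The converse follows by applying the same argument to $S^\op$ and using $(S^\op)^\op = S$. I do not expect any genuine obstacle here; the entire lemma is a definitional verification, and the main thing to get right is the consistent swapping of left/right endpoints and of the order of the two cups under reflection.
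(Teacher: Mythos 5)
Your proof is correct, and it is exactly the intended argument: the paper states this lemma as a brief remark with no proof at all, treating it as a definitional verification, which is precisely what you have carried out (including the key bookkeeping points that reflection swaps starting/ending points and that the two cups must be listed in reversed order).
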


\section{Interweaved laced cups}
\label{sec:laced}

We first show that in a 4-cap free configuration, a pair of interweaved laced $(n-1)$-cups from $p$ to $r$ and $q$ to $s$ respectively (so that $p < q \leq r < s$) is sufficient to force an $n$-gon.
The following covers the degenerate case $q = r$. 

\begin{lemma}[Balko and Valtr \cite{balko2017sat}]
\label{lem:cupcup}
Take any $n \geq 3$ and a 4-cap, $n$-cup free configuration $S$.
If the ending point $x$ of an $(n-1)$-cup $C_1$
is also the starting point of another $(n-1)$-cup $C_2$, then $S$ contains an $(3, n-1)$-gon. 	
\end{lemma}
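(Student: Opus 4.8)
The plan is to exhibit the $(3,n-1)$-gon explicitly from $C_1$ and $C_2$, with the whole argument driven by one principle: since $S$ is $n$-cup free, no sequence of $n$ vertices can have all of its consecutive triples equal to $3$-cups, so whenever such a sequence threatens to appear I may instead conclude that some triple is a $3$-cap. Write $C_1 = a_1 a_2 \cdots a_{n-2}\,x$ and $C_2 = x\,b_1 b_2 \cdots b_{n-2}$; since $C_1$ ends at $x$ and $C_2$ starts at $x$, every $a_i$ precedes $x$ and every $b_j$ follows it, so the two cups meet only at $x$. The first thing to notice is that the naive attempt fails: one cannot simply place a $3$-cap over the endpoints of $C_1$ (or of $C_2$) alone, because there is no guarantee that any interior vertex forms a $3$-cap rather than a $3$-cup with those endpoints. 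Hence the $(n-1)$-cup of the gon must in general be a genuinely new cup that mixes vertices of $C_1$ and $C_2$.

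The device that produces such a cup is a case split on the slope label $s(a_{n-2}b_1)$ of the single ``crossing'' edge joining the last vertex of $C_1$ to the first interior vertex of $C_2$; as $S$ is $4$-cap free this label is $1$ or $2$. If $s(a_{n-2}b_1)=1$, then a label-$1$ edge extends a cup to its left, so prepending $a_{n-2}$ to the $(n-2)$-cup $b_1\cdots b_{n-2}$ gives an $(n-1)$-cup $C_3 = a_{n-2}b_1\cdots b_{n-2}$ running from $a_{n-2}$ to $b_{n-2}$. I then claim $a_{n-2}\,x\,b_{n-2}$ is a $3$-cap: were it a $3$-cup, the sequence $a_1\cdots a_{n-2}\,x\,b_{n-2}$ would have all of its consecutive triples equal to those of $C_1$ together with $a_{n-2}x b_{n-2}$, i.e.\ it would be an $n$-cup, a contradiction. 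Since $a_{n-2}<x<b_{n-2}$, the pair $(C_3,\ a_{n-2}x b_{n-2})$ is the desired $(3,n-1)$-gon.

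The case $s(a_{n-2}b_1)=2$ is the mirror image (via Remark~\ref{rem:4cap-mirror}): now a label-$2$ edge extends a cup to its right, so appending $b_1$ to the $(n-2)$-cup $a_1\cdots a_{n-2}$ yields an $(n-1)$-cup $C_4 = a_1\cdots a_{n-2}\,b_1$ from $a_1$ to $b_1$, and $a_1\,x\,b_1$ must be a $3$-cap (otherwise $a_1\,x\,b_1\cdots b_{n-2}$ is an $n$-cup), giving a $(3,n-1)$-gon with endpoints $a_1,b_1$. The step I expect to be most delicate is matching the orientation correctly in each case---pairing a label-$1$ crossing edge with the cup lying to its \emph{right} and a label-$2$ crossing edge with the cup lying to its \emph{left}---since this is exactly what makes the two cases genuine mirror images and what guarantees both that the new cup has length $n-1$ and that the forced $3$-cap sits over its endpoints. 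Everything else reduces to the single recurring contradiction ``an all-cup sequence of length $n$ cannot exist,'' and the degenerate small case $n=3$ (where $a_1\cdots a_{n-2}$ and $b_1\cdots b_{n-2}$ collapse to single vertices) goes through unchanged because $1$-cups are permitted.
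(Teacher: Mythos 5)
Your proof is correct and takes essentially the same route as the paper's: a case split on the label of the edge joining the second-to-last vertex of $C_1$ to the second vertex of $C_2$, construction of the crossing $(n-1)$-cup in each case, and forcing the $3$-cap over its endpoints via $n$-cup-freeness. The only cosmetic difference is that the paper dispatches the label-$2$ case by mirror reflection (Remark~\ref{rem:4cap-mirror}) instead of writing it out explicitly as you do.
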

\begin{center}
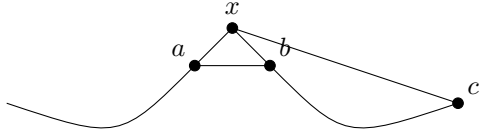
\begin{figure}
\begin{tikzpicture}
\draw (-3,-1) .. controls (-1.5,-1.5) .. (-0.5,-0.5) -- (0, 0);
\draw (0, 0) -- (0.5, -0.5) .. controls (1.5,-1.5) .. (3,-1);
\draw (-0.5,-0.5) -- (0.5, -0.5);
\draw (0, 0) -- (3, -1);

\draw[fill] (0, 0) circle (2pt) node[anchor=south, outer ysep=2pt]{$x$};
\draw[transparent] (0, -0.5) circle (2pt) node[anchor=north]{1};

\draw[fill] (-0.5, -0.5) circle (2pt) node[anchor=south east]{$a$};
\draw[fill] (0.5, -0.5) circle (2pt) node[anchor=south west]{$b$};
\draw[fill] (3, -1) circle (2pt) node[anchor=south west]{$c$};

\draw[transparent] (1.5, -0.7) circle (2pt) node[anchor=north]{$Q$};
\draw[transparent] (-1.5, -0.7) circle (2pt) node[anchor=north]{$P$};
\end{tikzpicture}
\caption{Figure for the proof of Lemma \ref{lem:cupcup}. The cup $P$ ends at $a$ and the cup $Q$
is from $b$ to $c$.}
\label{fig:lem-cupcup}
\end{figure}
\end{center}
\begin{proof}
(See Figure \ref{fig:lem-cupcup}) Say $C_1 = Px$ where $P$ is an $(n-2)$-cup that ends with some vertex $a$.
Likewise, say that $C_2 = xQ$ where $Q$ is an $(n-2)$-cup that starts with $b$.
Note that the statement to prove and the definitions introduced so far are symmetric under reflection.
So without loss of generality, we can assume that $ab$ is labeled 1.
Now $aQ$ is a $(n-1)$-cup because $ab$ has label 1.
Say that $Q$ ends with the point $c$.

If $axc$ is a cap, then we find the $(3, n-1)$-gon formed by $axc$ and $aC_2'$ and we are done.
If $axc$ is a cup, then the $(n-1)$-cup $C_1$ extends to the right with vertex $c$, contradicting that $S$ is $n$-cup free.
\end{proof}

Now we show the general case $q \leq r$.
\begin{lemma}
\label{lem:inter-gon}
Take any 4-cap, $n$-cup free configuration $S$ where $n \geq 3$.
If $S$ contains a pair of interweaved laced $(n-1)$-cups, then $S$ contains an $(3, n-1)$-gon.
\end{lemma}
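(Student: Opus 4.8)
The plan is to reduce to the degenerate case already handled by Lemma \ref{lem:cupcup}: it suffices to produce a single vertex $x$ that is simultaneously the ending point of an $(n-1)$-cup and the starting point of another $(n-1)$-cup, since then Lemma \ref{lem:cupcup} yields the desired $(3,n-1)$-gon. Write the two interweaved laced cups as $C_1$ from $p$ to $r$ and $C_2$ from $q$ to $s$ with $p<q\le r<s$, and let $A$ (ending at $p$), $B$ (starting at $r$) witness the lacing of $C_1$ with $|A|+|B|=n-1$, and $A'$ (ending at $q$), $B'$ (starting at $s$) witness the lacing of $C_2$ with $|A'|+|B'|=n-1$. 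By Lemma \ref{lem:laced-mirror} every hypothesis is symmetric under mirror reflection, so I may freely pass to $S^\op$. If $q=r$ the two cups already share the vertex $q=r$ and Lemma \ref{lem:cupcup} applies directly, so from now on I assume $q<r$.

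First I would pin down the labels at the two \emph{corners}. If the edge $pq$ were labelled $1$ then, since a label-$1$ edge extends any cup to its left, $pC_2$ would be an $n$-cup, contradicting $n$-cup freeness; hence $pq$ is labelled $2$, and symmetrically $rs$ is labelled $1$. Consequently $Aq$ is a cup ending at $q$ (a label-$2$ edge extends the cup $A$ to its right) and $rB'$ is a cup starting at $r$ (a label-$1$ edge extends the cup $B'$ to its left). This already disposes of several cases: if $|A|=n-2$ then $Aq$ is an $(n-1)$-cup ending at $q$, which together with the $(n-1)$-cup $C_2$ starting at $q$ feeds Lemma \ref{lem:cupcup}; dually if $|B'|=n-2$ we finish at $r$. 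In particular the case $n=3$ is immediate, since then $|A|=|B|=n-2=1$. Thus I may assume $n\ge 4$ and $|B|,|A'|\ge 2$. A short computation of the same flavour then shows that the bends $r'rr^{+}$ and $q^{-}qq'$ must be $3$-caps (here $r'$ and $q'$ are the vertices adjacent to $r$ and $q$ along $C_1$ and $C_2$, while $r^{+}$ and $q^{-}$ are the vertices adjacent to $r$ and $q$ along $B$ and $A'$), for otherwise $C_1B$ or $A'C_2$ would be a cup of length at least $n$.

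The heart of the argument, and the step I expect to be hardest, is to locate the common vertex $x$ ending and starting an $(n-1)$-cup in the remaining regime, where the two interweaved cups genuinely cross strictly inside the overlap interval $[q,r]$. The natural cups available at $q$ and at $r$ fall short of length $n-1$ by exactly one, and the extra length promised by the lacing sits on the far outside (the cup $A$ lies left of $p$, the cup $B'$ lies right of $s$); the difficulty is to route this spare length across the crossing without ever building a cup of length $n$. I would handle this by induction, shrinking the interweaving: using the forced corner caps together with Corollary \ref{lem:4capfree}, I would produce from $C_1,C_2$ a new pair of interweaved laced $(n-1)$-cups whose endpoints are strictly closer, so that the gap $r-q$, measured in vertices, strictly decreases, and then iterate until the gap closes to the degenerate situation $q=r$ governed by Lemma \ref{lem:cupcup}. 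The delicate point — and the reason the lacing length is calibrated to $n-1$ rather than $n$ — is verifying that the newly constructed cups remain both laced and interweaved at each step; this bookkeeping, rather than any single geometric idea, is where I expect the real work to lie.
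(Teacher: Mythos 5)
Your preliminary reductions are all correct: the forced labels of $pq$ and $rs$, the disposal of the cases $q=r$, $|A|=n-2$, $|B'|=n-2$ (hence of $n=3$), and the forced corner caps. But the proof has a genuine gap at exactly the place you flag yourself: the general case is not an argument, it is a plan. You propose to iterate, replacing $(C_1,C_2)$ by a new pair of interweaved laced $(n-1)$-cups whose gap $r-q$ is strictly smaller, but you never construct such a pair, and nothing in what precedes makes the construction plausible --- the corner caps $r'rr^{+}$ and $q^{-}qq'$ give you caps, not new laced cups, and no mechanism is offered for transporting the spare lacing length (which sits outside $[p,s]$) across the crossing while preserving both lacing and interweaving. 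As written, the proof does not go through.

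The gap stems from a strategic misstep: you committed to reducing everything to Lemma \ref{lem:cupcup}, i.e.\ to manufacturing a single vertex that simultaneously ends one $(n-1)$-cup and starts another. That is not needed. A $(3,n-1)$-gon is, by definition, a $3$-cap and an $(n-1)$-cup sharing both endpoints, and the cup can be $C_1$ itself. The paper's proof: assume $q<r$; by mirror symmetry assume WLOG the crossing edge $qr$ has label $1$; the edge $pq$ is forced to have label $2$ (exactly as you showed). Now examine the triple $pqr$. If $pqr$ is a cap, it shares the endpoints $p,r$ with the $(n-1)$-cup $C_1$, and you are done --- no common vertex, no iteration. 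If $pqr$ is a cup, take the lacing witnesses $C_p$ (ending at $p$) and $C_r$ (starting at $r$) of $C_1$ with $|C_p|+|C_r|=n-1$: the label-$2$ edge $pq$ extends $C_p$ to the right by $q$, the label-$1$ edge $qr$ extends $C_r$ to the left by $q$, and since $pqr$ is a cup these glue into a single cup $C_pqC_r$ of size $|C_p|+|C_r|+1=n$, contradicting $n$-cup freeness. Note that only the lacing of $C_1$ is ever used, and this also answers your parenthetical question: the calibration $|C_p|+|C_r|=n-1$ is precisely what makes the glued cup have length $n$.
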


\begin{center}
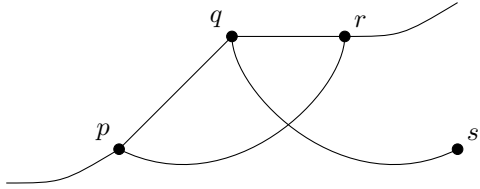
\begin{figure}
\begin{tikzpicture}[scale=1.5]
\draw (-2,-0.5) .. controls (-1, -1) and (0, 0) .. (0, 0.5);
\draw (1,-0.5) .. controls (0, -1) and (-1, 0) .. (-1, 0.5);
\draw (-1, 0.5) -- (0, 0.5);
\draw (-2, -0.5) -- (-1, 0.5);

\draw (-2, -0.5) .. controls (-2.5, -0.8) .. (-3, -0.8);
\draw (0, 0.5) .. controls (0.5, 0.5) .. (1, 0.8);

\draw[fill] (-2, -0.5) circle (1.3333pt) node[anchor=south east]{$p$};
\draw[fill] (-1, 0.5) circle (1.3333pt) node[anchor=south east]{$q$};
\draw[fill] (0, 0.5) circle (1.3333pt) node[anchor=south west]{$r$};
\draw[fill] (1, -0.5) circle (1.3333pt) node[anchor=south west]{$s$};
\draw[transparent] (-2.7, -0.6) circle (1.3333pt) node {$C_p$};
\draw[transparent] (0.7, 0.85) circle (1.3333pt) node {$C_r$};

\draw[transparent] (-0.5, 0.5) circle (1.3333pt) node[anchor=south] {1};
\draw[transparent] (-1.5, 0) circle (1.3333pt) node[anchor=south east] {2};

\draw[transparent] (-1.2, -0.4) circle (1.3333pt) node {$C_1$};
\draw[transparent] (0.2, -0.4) circle (1.3333pt) node {$C_2$};

\end{tikzpicture}
\caption{Figure for the proof of Lemma \ref{lem:inter-gon}. The cup $C_p$ ends with $p$ and $C_r$ starts with $r$.}
\label{fig:lem-inter-gon}
\end{figure}
\end{center}

\begin{proof}
(See Figure \ref{fig:lem-inter-gon}) Let $C_1$ and $C_2$ be the pair of interweaved laced $(n-1)$-cups from $p$ to $r$ and $q$ to $s$ respectively, so that $p < q \leq r < s$.
If $q = r$, we are done by Lemma \ref{lem:cupcup}, so we can assume $q < r$.
As the setup is symmetric along reflection (e.g. Remark \ref{rem:4cap-mirror} and \ref{lem:laced-mirror}), we can assume that the edge $qr$ is of label 1 without loss of generality.

As $C_1$ is laced we have a cup $C_p$ ending with $p$ and
and a cup $C_r$ starting with $r$ so that $|C_p| + |C_r| = n - 1$.
Observe that the edge $pq$ is of label 2, or otherwise we can extend the $(n-1)$-cup $C_2$ to left with $pq$
and reach contradiction.

If $pqr$ is a cap, then the cap $pqr$ and the cup $C_1$ forms a $(3, n-1)$-gon and we are done.
Now assume that $pqr$ is a cup. 
Since $pq$ is of label 2, the cup $C_p$ extends to right with $q$.
Since $qr$ is of label 1, the cup $C_r$ extends to left with $q$ as well.
Now the cups $C_pq$ and $qC_r$ are joined along the vertex $q$.
Since $pqr$ is a cup, they all connect to make a cup $C_pqC_r$ of size $|C_p| + |C_r| + 1 = n$, leading to contradiction.
\end{proof}

We prepare more lemmas and observations before starting the main proof.
In particular, we will use the following special case to find a pair of interweaved laced cups.
Note that for this lemma we have $n \geq 4$ instead of $n \geq 3$.
\begin{lemma}
\label{lem:special}
Take any 4-cap, $n$-cup free configuration $S$ with $n \geq 4$.
Assume an $(n-1)$-cup $C$ from $x$ to $y$ in $S$.
Also, assume an $(n-2)$-cup $C_x$ that ends with $x$, and an $(n-2)$-cup $C_y$ that starts with $y$.
Then $S$ contains a pair of interweaved laced $(n-1)$-cups.
\end{lemma}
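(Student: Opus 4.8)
The plan is to use the cup $C$ itself as one of the two interweaved laced cups and to manufacture the second one by bending $C_x$ or $C_y$ by a single vertex of $C$. First note that $C$ is already laced: the $(n-2)$-cup $C_x$ ends at $x$ and the one-point cup $\{y\}$ starts at $y$, and $|C_x| + 1 = n-1$ (Definition \ref{def:laced}). So it suffices to produce one more laced $(n-1)$-cup that is interweaved with $C$. Write $C = c_1 c_2 \cdots c_{n-1}$ with $c_1 = x$ and $c_{n-1} = y$, let $u$ be the penultimate vertex of $C_x$, and let $v$ be the second vertex of $C_y$. Before splitting into cases I would record two facts forced by the maximality of $C$ together with $n$-cup freeness: no label-$1$ edge can enter $x$ (prepending such an edge to $C$ would create an $n$-cup), so in particular $s(ux) = 2$; dually, no label-$2$ edge can leave $y$, so $s(yv) = 1$.

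These facts pin down the two easy cases. If the first edge $xc_2$ has label $2$, then $s(ux) = 2 \le s(xc_2)$ forces $uxc_2$ to be a $3$-cup, so appending $c_2$ to $C_x$ gives an $(n-1)$-cup $C_x c_2$; it is laced (trivial cup at its start together with the $(n-2)$-cup $c_2\cdots c_{n-1}$) and interweaved with $C$, since if $p$ is its starting vertex then $p < x \le c_2 < y$. Symmetrically, if the last edge $c_{n-2}y$ has label $1$, then $c_{n-2}yv$ is a $3$-cup (as $s(yv)=1$) and prepending $c_{n-2}$ to $C_y$ produces a laced $(n-1)$-cup interweaved with $C$. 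Since everything in sight is invariant under mirror reflection (Lemma \ref{lem:laced-mirror}), these two constructions are mirror images of one another.

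The remaining, and genuinely hard, case is when the first edge of $C$ has label $1$ and the last has label $2$; by the two facts above this is exactly the case in which both ``corner'' triples $uxc_2$ and $c_{n-2}yv$ are $3$-caps, so neither easy construction applies. My plan here is to look for the second cup by shifting $C$ at both ends simultaneously: take $C_1 = u\,c_2 c_3\cdots c_{n-1}$ (prepend $u$ to $C$ with its first vertex deleted) and $C_2 = c_1\cdots c_{n-2}\,v$ (append $v$). Whenever these are genuine cups they are interweaved, since $C_1$ runs from $u$ to $y$ and $C_2$ from $x$ to $v$ with $u < x \le y < v$, and both are laced (via $\{u\}$ together with $C_y$, and via $C_x$ together with $\{v\}$, respectively). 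The obstacle is precisely that $C_1$ and $C_2$ are cups only when the junction triples $uc_2c_3$ and $c_{n-3}c_{n-2}v$ are themselves $3$-cups. For $n=4$ this is automatic, because the final edge of $C$ carries the maximal label $2$ and this forces $uc_2y$ to be a cup; for larger $n$ a junction triple can a priori be a cap. This is where I expect the real work to be: a cap at the junction forces $s(uc_2) = 2$, and then the label-$2$ edge $uc_2$ lets one splice $C_x$ onto a prefix of $C$ and push the construction one vertex deeper. I would chase this splicing with the slope labels and the two maximality facts, aiming to show that it either recovers a valid second cup after all or else forces a forbidden $n$-cup or $4$-cap; carrying out this end-analysis carefully is the crux of the proof.
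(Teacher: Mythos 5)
Your two ``easy'' cases are handled correctly (the facts $s(ux)=2$, $s(yv)=1$, the cup/lacing/interweaving verifications all check out), but the proof has a genuine gap exactly where you say the crux is: the case where the first edge of $C$ has label $1$ and the last has label $2$ is never resolved, only sketched. This case cannot be dismissed, since it is invariant under mirror reflection, and your proposed cups $C_1 = u c_2\cdots c_{n-1}$ and $C_2 = c_1\cdots c_{n-2}v$ really can fail for $n\ge 5$: the junction triple $uc_2c_3$ is a cap whenever $s(uc_2)=2$ and $s(c_2c_3)=1$ with the wrong orientation, and nothing you have established rules that out. Worse, in this regime your pairing is fragile in a structural way: $C_1$ and $C_2$ must be paired \emph{with each other} (neither can be paired with $C$, since $C_1$ and $C$ share the right endpoint $y$ and $C_2$ and $C$ share the left endpoint $x$), so a single bad junction leaves you with no interweaved pair at all --- every cup you have built then starts at $x$ or ends at $y$, and interweaving requires one cup to reach strictly past $y$.

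The missing idea, which is how the paper closes this case, is to pivot on the \emph{chord} $xy$ rather than on the boundary edges of $C$. Since the statement is mirror-symmetric, one may assume $s(xy)=1$ from the outset; then $xC_y$ is an $(n-1)$-cup (label-$1$ edges extend cups to the left), laced via $C_x$ and a $1$-cup, and it ends strictly past $y$ --- this is the universal partner your construction lacks. Writing $C_x = Px$ with $P$ ending at $u$, the paper then splits on $s(uc_2)$ and $s(c_2y)$: if $s(uc_2)=1$ then $uc_2\cdots c_{n-1}$ is a laced cup interweaved with $xC_y$; if $s(uc_2)=2$ and $s(c_2y)=1$ then $c_2C_y$ is laced (via $Pc_2$) and interweaved with $C$; if both are $2$ then $Pc_2y$ is laced and interweaved with $xC_y$. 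Note that in the last sub-case the partner $xC_y$ is indispensable and is only available because the label of $xy$ was fixed by the initial reflection --- your case analysis, which spends the reflection symmetry on the first and last edges of $C$ instead, has no access to it, and completing your ``splicing'' plan would in effect force you to reconstruct this chord-based argument.
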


\begin{center}
\begin{figure}
\begin{tikzpicture}
\draw (-3,-1) .. controls (-1.5,-1.5) .. (-0.5,-0.5) -- (0, 0);
\draw (0, 0) -- (0.5, -0.5) .. controls (2, -2) and (4, -2) .. (5.5, -0.5) -- (6, 0);
\draw (6, 0) -- (6.5, -0.5) .. controls (7.5,-1.5) .. (6+3,-1);
\draw[fill] (0, 0) circle (2pt) node[anchor=south]{$x$};
\draw[fill] (-0.5, -0.5) circle (2pt) node[anchor=south]{$a$};
\draw[fill] (0.5, -0.5) circle (2pt) node[anchor=south]{$b$};
\draw[fill] (6, 0) circle (2pt) node[anchor=south]{$y$};
\draw[fill] (6.5, -0.5) circle (2pt) node[anchor=south]{$d$};
\draw[fill] (5.5, -0.5) circle (2pt) node[anchor=south]{$c$};
\draw[transparent] (3, -1.5) circle (2pt) node[anchor=south]{$Q$};
\draw[transparent] (-1.5, -0.5) circle (2pt) node[anchor=north]{$P$};
\draw[transparent] (7.5, -0.5) circle (2pt) node[anchor=north]{$R$};
\end{tikzpicture}
\caption{Figure for the proof of Lemma \ref{lem:special}. The cups $P$, $Q$ and $R$
contains the endpoints $a$, $b$ and $c$, and $d$ respectively.}
\label{fig:lem-special}
\end{figure}
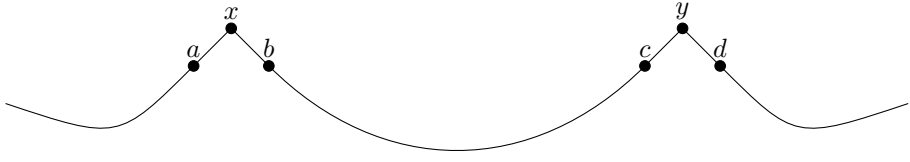
\end{center}

\begin{proof}
Using the symmetry of the statement along mirror reflection,
we can assume that $xy$ is labeled 1 without loss of generality.

Let $C_x = Px$, $C = xQy$ and $C_y=yR$ so that $P, Q, R$ are $(n-3)$-cups (see Figure \ref{fig:lem-special}).
Let $a$ be the ending point of $P$ and $d$ be the starting point of $R$.
Say that $Q$ is from point $b$ to $c$. 
As $|P|, |Q|, |R| \geq 1$, the points $a, b, c$ and $d$ are all well-defined and we have $a < x < b \leq c < y < d$ (there is a possibility for $b=c$ when $n = 4$).

The sequence $xyR$ is an $(n-1)$-cup because $xy$ is labeled 1 and $C_y = yR$ is $(n-2)$-cup. 
It is also laced because $x$ is the endpoint of the $(n-2)$-cup $C_x = Px$.
So we have two laced $(n-1)$-cups $xQy = C$ and $xyR$.
 
Do case analysis on the label of $ab$. Assume first that $ab$ is labeled 1. 
Then $aQy$ is an $(n-1)$-cup. It is also laced because $a$ is a 1-cup and $yR$ is a $(n-2)$-cup. 
$aQy$ interweaves with the laced $(n-1)$-cup $xyR$ because $a < x < y$ and the endpoint of $R$ is strictly on the right of $y$.

Assume now that $ab$ is labeled 2. 
We do another case analysis on the label of $by$.
If $by$ is labeled 1, then $byR$ is a laced $(n-1)$-cup because it is a $(n-1)$-cup and $Pb$ is a $(n-2)$-cup.
Now the laced $(n-1)$-cups $xQy$ and $byR$ are interweaved because $x < b < y$ and the endpoint of $R$ is strictly on the right of $y$.
If $by$ is labeled 2, then $Pby$ is an $(n-1)$-cup, and it is also laced because $yR$ is an $(n-2)$-cup.
Now laced $(n-1)$-cups $Pby$ and $xyR$ are interweaved, because the starting point of $P$ comes before $x$, $x < y$, and the endpoint of $R$ is strictly on the right side of $y$.
The proof is now done.
\end{proof}

We introduce the following terminologies for a 4-cap, $n$-cup free configuration $S$ of size at least $\binom{n-1}{2} + 1$.
Note that the following definition does not depend on a particular choice of a slope labeling of $S$.
\begin{definition}
\label{def:endpts}
Assume an arbitrary 4-cap, $n$-cup free configuration $S$ of size at least $\binom{n-1}{2} + 1$ with $n \geq 3$.

Define $L(S)$ as the set of left endpoints of all $(n-1)$-cups in $S$, and $R(S)$ as the set of right endpoints of all $(n-1)$-cups in $S$.
Let $p_S$ be the rightmost point of $L(S)$ and $q_S$ be the leftmost point of $R(S)$.
By Theorem \ref{thm:capcup-set}, there is at least one $(n-1)$-cup in $S$, so that $L(S), R(S)$ are nonempty and the points $p_S, q_S$ are well-defined.
\end{definition}

\begin{remark}
\label{rem:psqs}
Note that under the assumption of Definition \ref{def:endpts},
$L(S^\op) = R(S)$, $R(S^\op) = L(S)$, $p_{S^\op} = q_S$ and $q_{S^\op} = p_S$.
\end{remark}

\begin{lemma}
\label{lem:endpts}
Under the assumption of Definition \ref{def:endpts}, 
we always have $p_S \leq q_S$.
In other words, the sets $L(S)$ and $R(S)$ are separated in the increasing order with at most one overlap.
\end{lemma}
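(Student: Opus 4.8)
The plan is to argue by contradiction, using only the cup-extension corollary of Section \ref{sec:alpha-beta} together with the fact that $S$ is $n$-cup free. Since $S$ is $4$-cap free it admits a slope labeling whose values lie in $\{1,2\}$ (Theorem \ref{thm:label}), and I fix any such labeling (Definition \ref{def:endpts} guarantees the sets $L(S), R(S)$ and the points $p_S, q_S$ do not depend on this choice). The size hypothesis $|S| \geq \binom{n-1}{2}+1$ enters only through Theorem \ref{thm:capcup-set}, which forces the existence of at least one $(n-1)$-cup and hence the well-definedness of $p_S$ and $q_S$; the inequality itself is purely local.

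Now suppose toward a contradiction that $p_S > q_S$. By the defining properties of $p_S \in L(S)$ and $q_S \in R(S)$, there is an $(n-1)$-cup $C_1$ starting at $p_S$ and an $(n-1)$-cup $C_2$ ending at $q_S$. Since $q_S < p_S$, the pair $q_S p_S$ is a genuine edge, so it carries a label, and the key step is a two-case split on that label. If $q_S p_S$ is labeled $2$, then because $C_2$ is a cup ending at $q_S$, the cup-extension corollary (an edge of label $2$ extends any cup ending at its left endpoint to the right) shows that $C_2 p_S$ is a cup; its size is $n$, contradicting $n$-cup freeness. If instead $q_S p_S$ is labeled $1$, then because $C_1$ is a cup starting at $p_S$, the corollary (an edge of label $1$ extends any cup starting at its right endpoint to the left) shows that $q_S C_1$ is a cup of size $n$, again a contradiction. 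As the label of $q_S p_S$ must be $1$ or $2$, both cases are impossible, and we conclude $p_S \leq q_S$.

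Finally, I would translate this into the stated "at most one overlap" formulation: since $p_S = \max L(S)$ and $q_S = \min R(S)$, the inequality $p_S \leq q_S$ means every point of $L(S)$ lies weakly to the left of every point of $R(S)$; any common point $x \in L(S) \cap R(S)$ would satisfy $x \leq p_S \leq q_S \leq x$ and hence equal $p_S = q_S$, so the overlap consists of at most this single vertex. I do not expect any real obstacle here — the only thing requiring care is matching each label to the correct direction of cup extension (label $2$ extends to the right, label $1$ to the left), so that the edge $q_S p_S$ fuses with the appropriate one of $C_1, C_2$ to produce a forbidden $n$-cup.
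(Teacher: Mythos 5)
Your proof is correct and follows essentially the same argument as the paper: assume $p_S > q_S$, split on the label of the edge $q_S p_S$, and in each case fuse that edge with the appropriate $(n-1)$-cup (the one starting at $p_S$ for label $1$, the one ending at $q_S$ for label $2$) to produce a forbidden $n$-cup. The only difference is cosmetic — you spell out the ``at most one overlap'' rephrasing, which the paper leaves implicit.
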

\begin{proof}
Assume the contrary that $p_S > q_S$. 
Take any slope labeing of $S$. If the edge $q_Sp_S$ is of label 1 we can extend an $(n-1)$-cup starting with $p_S$ to the left by $q_S$, finding an $n$-cup in $S$.
Likewise, if the edge $q_Sp_S$ is of label 2 we can extend an $(n-1)$-cup starting with $q_S$ to the right by $p_S$, finding an $n$-cup in $S$.
Both cases lead to contradiction.
\end{proof}

We rule out the following case where we can find a pair of interweaved laced $(n-1)$-cups immediately as well.
\begin{lemma}
\label{lem:david}
Assume an arbitrary 4-cap, $n$-cup free configuration $S$ of size at least $\binom{n-1}{2} + 1$.
If a $(n-2)$-cup in $S$ ends with $p_S$ and a $(n-2)$-cup in $S$ starts with $q_S$,
then $S$ contains a pair of interweaved laced $(n-1)$-cups.
\end{lemma}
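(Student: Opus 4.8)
The plan is to produce the two natural candidate $(n-1)$-cups hanging off of $p_S$ and $q_S$, observe that each is laced essentially for free, and then either interweave them directly or feed the degenerate configuration into Lemma \ref{lem:special}. Concretely, since $p_S \in L(S)$ there is an $(n-1)$-cup $A$ starting at $p_S$, say ending at a point $r$; since $q_S \in R(S)$ there is an $(n-1)$-cup $B$ ending at $q_S$, say starting at a point $l$. By hypothesis we are also handed an $(n-2)$-cup $C_p$ ending at $p_S$ and an $(n-2)$-cup $C_q$ starting at $q_S$, and these are exactly what make $A$ and $B$ laced.

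First I would record lacedness. Pairing $C_p$, which ends at the left endpoint $p_S$ of $A$, with the trivial $1$-cup $\{r\}$ gives $|C_p| + |\{r\}| = (n-2) + 1 = n-1$, so $A$ is laced in the sense of Definition \ref{def:laced}; symmetrically, pairing the $1$-cup $\{l\}$ with $C_q$ shows $B$ is laced. Next I would pin down the linear order of the four relevant endpoints. Since $l \in L(S)$ and $p_S$ is the rightmost element of $L(S)$ we get $l \leq p_S$; since $r \in R(S)$ and $q_S$ is the leftmost element of $R(S)$ we get $q_S \leq r$; and $p_S \leq q_S$ by Lemma \ref{lem:endpts}. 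Thus $l \leq p_S \leq q_S \leq r$.

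The argument then splits on whether some $(n-1)$-cup runs all the way from $p_S$ to $q_S$. If such a cup $D$ exists, then the triple $(D, C_p, C_q)$ is precisely the input required by Lemma \ref{lem:special} with $x = p_S$ and $y = q_S$, which immediately outputs a pair of interweaved laced $(n-1)$-cups and we are done. If no such cup exists, then the inequality $l \leq p_S$ must in fact be strict, since otherwise $B$ would be an $(n-1)$-cup from $p_S$ to $q_S$; likewise $q_S \leq r$ must be strict, since otherwise $A$ would be such a cup. Hence $l < p_S \leq q_S < r$, which is exactly the condition of Definition \ref{def:inter-laced} for $B$ (from $l$ to $q_S$, playing the role of $C_1$) and $A$ (from $p_S$ to $r$, playing the role of $C_2$) to be interweaved; combined with the lacedness from the previous paragraph this finishes the proof.

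The one point that needs care, and that I expect to be the crux, is the case split: the ``obvious'' pair $A, B$ fails to interweave precisely when $p_S$ coincides with the left endpoint of $B$ or $q_S$ coincides with the right endpoint of $A$, and those two degeneracies together are exactly equivalent to the existence of an $(n-1)$-cup from $p_S$ to $q_S$, which is in turn precisely the situation Lemma \ref{lem:special} is built to resolve. I would also flag that invoking Lemma \ref{lem:special} silently requires $n \geq 4$; this costs nothing here, as the degenerate smallest case is absorbed by the base of the main induction.
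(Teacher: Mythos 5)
Your proof is correct and is essentially the paper's own argument: both take the extremal cups $C_l$ (ending at $q_S$) and $C_r$ (starting at $p_S$), deduce lacedness from the hypothesized $(n-2)$-cups together with trivial $1$-cups, order the four endpoints via Lemma \ref{lem:endpts}, and then either interweave the two cups directly or fall into the degenerate case resolved by Lemma \ref{lem:special}. Your case split (on the existence of an $(n-1)$-cup from $p_S$ to $q_S$) is just a mild repackaging of the paper's split on $p' = p_S$ or $q' = q_S$, and your explicit flag that Lemma \ref{lem:special} needs $n \geq 4$ is a point the paper glosses over.
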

\begin{proof}
An $(n-1)$-cup $C_l$ from some $p' \in L(S)$ to $q_S$ exists by the definition of $q_S$ and $L(S)$.
Likewise, an $(n-1)$-cup $C_r$ from $p_S$ to some $q' \in R(S)$ exists by the definition of $p_S$ and $R(S)$.
Note that $C_l$ (or $C_r$) is laced by the existence of a $(n-2)$-cup that ends with $p_S$ (or that starts with $q_S$).

We have $p' \leq p_S$ by the definition of $p_S$, $p_S \leq q_S$ by Lemma \ref{lem:endpts} and  $q_S \leq q'$ by the definition of $q_S$.
If $p' = p_S$, then we can apply lemma \ref{lem:special} to $C_l$ to conclude the proof.
If $q_S = q'$, then we can apply lemma \ref{lem:special} to $C_r$ to conclude the proof.
If none of such equalities hold, then $p' < p_S \leq q_S < q'$ holds so $C_l$ and $C_r$ forms a pair of interweaved laced $(n-1)$-cups.
\end{proof}

We define the rows of $S$ in the $(\alpha, \beta)$-plane and show that each row is nonempty.
\begin{definition}
\label{def:rows}
Fix an arbitrary 4-cap, $n$-cup free configuration $S$ of size at least $\binom{n-1}{2} + 1$ with $n \geq 3$ and a fixed slope labeing and $\alpha$-statistic.
For any $1 \leq i \leq n - 1$, define 
$$R_i(S) = \{p \in S : \beta(p) = i\} $$
which is the $i$'th row of the $(\alpha, \beta)$-plane of $S$ from the bottom.
Note that $R_{n-1}(S) = R(S)$ in particular by definition.
\end{definition}
\begin{lemma}
\label{lem:row-nonempty}
Under the assumption of Definition \ref{def:rows}, 
each row $R_i(S)$ is nonempty for all $1 \leq i \leq n - 1$.
\end{lemma}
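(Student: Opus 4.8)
The plan is to sweep through the vertices of $S$ from left to right and track the length of the longest cup seen so far, arguing that this length can grow by at most one at each new vertex. First I would list the vertices in increasing order as $p_1 < p_2 < \cdots < p_N$ with $N = |S|$, and set $M_k = \max_{1 \le j \le k} \beta(p_j)$, the length of the longest cup contained in the initial segment $\{p_1, \dots, p_k\}$. This interpretation is legitimate because a cup realizing $\beta(p_j)$ ends at $p_j$ and therefore uses only vertices $\le p_j$, so $\beta(p_j)$ depends only on the first $j$ vertices; hence $M_k$ really is the length of the longest cup among $p_1, \dots, p_k$.

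The two ends of the range are immediate. We have $M_1 = \beta(p_1) = 1$, since the leftmost vertex admits only the trivial $1$-cup. And $M_N = n-1$: because $|S| \ge \binom{n-1}{2} + 1$ strictly exceeds $\binom{n-1}{2} = \binom{4 + (n-1) - 4}{4-2}$, the set-theoretic cups-caps theorem (Theorem \ref{thm:capcup-set}, applied with $a = 4$ and $b = n-1$) forces $S$ to contain an $(n-1)$-cup, so some vertex has $\beta$-value $n-1$; moreover no cup is longer, as $S$ is $n$-cup free, so $\beta(p_j) \le n-1$ for every $j$.

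The crucial step is the increment bound $M_k \le M_{k-1} + 1$ for $k \ge 2$. I would prove it by taking a longest cup ending at $p_k$, of length $\beta(p_k)$: since $p_k$ is the largest among the first $k$ vertices, it must be the right endpoint of this cup, and deleting it leaves a cup of length $\beta(p_k) - 1$ whose endpoint lies among $p_1, \dots, p_{k-1}$. Hence $M_{k-1} \ge \beta(p_k) - 1$, and so $M_k = \max(M_{k-1}, \beta(p_k)) \le M_{k-1} + 1$. Together with the trivial monotonicity $M_{k-1} \le M_k$, the sequence $M_1, \dots, M_N$ climbs from $1$ to $n-1$ in steps of size $0$ or $1$, and therefore attains every value in $\{1, 2, \dots, n-1\}$.

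Finally, at any index $k$ where $M_k = v$ but $M_{k-1} = v - 1$, the newly added vertex must satisfy $\beta(p_k) = v$, because $p_k$ is exactly what raised the running maximum. Consequently each $v \in \{1, \dots, n-1\}$ occurs as a value of $\beta$, which is precisely the assertion that every row $R_v(S) = \{p \in S : \beta(p) = v\}$ is nonempty. The whole argument is elementary and, notably, does not invoke the slope labeling or the $(\alpha,\beta)$-plane; the only point demanding care is the bookkeeping in the increment bound, namely the observation that removing $p_k$ from one of its cups within the initial segment always yields a genuine cup ending at an earlier vertex, which holds simply because $p_k$ is the rightmost vertex of that segment.
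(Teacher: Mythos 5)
Your proof is correct and is essentially the paper's own argument in different bookkeeping: the paper takes, for each $i$, the leftmost point $x_i$ of the superlevel set $R_{\geq i}(S) = \{p \in S : \beta(p) \geq i\}$ and shows $x_i < x_{i+1}$ by exactly your truncation trick (deleting the right endpoint of a cup yields a shorter cup ending earlier), concluding $\beta(x_i) = i$; your first index $k$ with $M_k \geq v$ picks out precisely this vertex $x_v$. Both arguments also invoke Theorem \ref{thm:capcup-set} in the same way, with $a = 4$ and $b = n-1$, to guarantee an $(n-1)$-cup so that the top value $n-1$ is attained.
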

\begin{proof}
Define the set $R_{\geq i}(S) = \{p \in S : \beta(p) \geq i\}$ for all $1 \leq i \leq n - 1$.
By Theorem \ref{thm:capcup-set}, there is at least one $(n-1)$-cup $C$ in $S$, so that the right endpoint of $C$ is contained in $R_{\geq i}(S)$ for all $i$.
Therefore, the minimum (leftmost) point $x_i$ of $R_{\geq i}(S)$ exists for all $i$. 

We show that $x_i < x_{i+1}$ for all $1 \leq i < n - 1$.
As $x_{i+1}$ is in $R_{\geq(i+1)}(S)$, we can take a
$(i+1)$-cup $Cx_{i+1}$ that ends with $x_{i+1}$.
Let $y$ be the ending point of the $i$-cup $C$. 
Then $y \in R_{\geq i}(S)$ by definition and also $y < x_{i+1}$ because $Cx_{i+1}$ forms a cup.
This implies that $x_i = \min R_{\geq i}(S) \leq y < x_{i+1}$.

Because $S$ is $n$-cup free, the equality $R_{n-1}(S) = R_{\geq (n-1)}(S)$ holds and the set is nonempty.
For all $1 \leq i < n - 1$, the set $R_{i}(S) = R_{\geq (i + 1)}(S) \setminus R_{\geq i}(S)$ contains $x_i$ because $x_i < x_{i+1}$.
This concludes the proof.
\end{proof}

Now we are ready to prove the existence of a pair of interweaved laced $(n-1)$-cups by induction.
\begin{theorem}
\label{thm:main-lemma}
For any $n \geq 3$, any 4-cap, $n$-cup free configuration of size $\binom{n-1}{2}+2$ contains a pair of interweaved laced $(n-1)$-cups.
\end{theorem}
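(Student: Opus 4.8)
The plan is to induct on $n$, the base case $n=3$ being immediate: a $4$-cap, $3$-cup free configuration of size $3$ is just three vertices $u<v<w$, and the edges $uv$ and $vw$ are (trivially) laced $2$-cups interweaving in the degenerate sense $u<v\le v<w$ already covered by Lemma~\ref{lem:cupcup}. For the inductive step I assume the statement for $n-1$ and fix a $4$-cap, $n$-cup free configuration $S$ of size $\binom{n-1}{2}+2$ together with a slope labeling and its $\alpha$-statistic, writing $p_S,q_S,L(S),R(S)$ and the rows $R_i(S)$ as in Definitions~\ref{def:endpts} and~\ref{def:rows}.

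First I would dispose of the cases already handled. If some $(n-2)$-cup ends at $p_S$ and some $(n-2)$-cup starts at $q_S$, then Lemma~\ref{lem:david} furnishes the desired pair immediately. Since interweaving and lacedness are preserved under mirror reflection (Remark~\ref{rem:psqs} and Lemma~\ref{lem:laced-mirror}), I may assume without loss of generality that no $(n-2)$-cup ends at $p_S$; equivalently $\beta(p_S)\le n-3$, so that $p_S<q_S$ strictly by Lemma~\ref{lem:endpts}.

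The core is a reduction lowering $n$ by one. A configuration is $(n-1)$-cup free precisely when every vertex has $\beta\le n-2$, so the only deletion that removes all $(n-1)$-cups is to strip the top row $R_{n-1}(S)=R(S)$. I therefore set $S'=S\setminus R(S)$, which is $4$-cap free and now $(n-1)$-cup free. Each row is nonempty by Lemma~\ref{lem:row-nonempty}, and in the generic case $|R(S)|\le n-2$ we obtain $|S'|\ge\binom{n-1}{2}+2-(n-2)=\binom{n-2}{2}+2$, so the induction hypothesis produces a pair of interweaved laced $(n-2)$-cups inside $S'$. I would then lift this pair back to $S$: each such $(n-2)$-cup ends (or starts) at a vertex of row exactly $n-2$, adjacent to the stripped top row, and adjoining a suitable top-row vertex along a label-$2$ (resp.\ label-$1$) edge as in Corollary~\ref{lem:4capfree} extends it to an $(n-1)$-cup, after which Lemma~\ref{lem:special} is used to package the resulting endpoint data into a genuine pair of interweaved laced $(n-1)$-cups while preserving the ordering $p<q\le r<s$.

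The main obstacle I expect is the boundary case $|R(S)|=n-1$, i.e.\ when the top row $\{(1,n-1),\dots,(n-1,n-1)\}$ is completely full: here stripping it overshoots the budget by exactly one vertex, so the induction no longer applies, and the two $(n-1)$-cups must instead be read off directly from the rigid full-row structure, in combination with the standing assumption that no $(n-2)$-cup ends at $p_S$. A second, more delicate point is the lifting bookkeeping: one must verify that the two extended cups remain \emph{simultaneously} interweaved and laced, and it is precisely here that the freedom in the label ($1$ versus $2$) of the key edges, analyzed by the same kind of case split as in Lemma~\ref{lem:special}, will be essential.
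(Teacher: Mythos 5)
Your base case, the use of Lemma~\ref{lem:david}, and the mirror-symmetry reduction (you assume no $(n-2)$-cup ends at $p_S$; the paper assumes, equivalently up to reflection, that no $(n-2)$-cup starts at $q_S$) all match the paper. But the core of your inductive step diverges from the paper's, and the divergence opens genuine gaps. Your claim that ``the only deletion that removes all $(n-1)$-cups is to strip the top row'' is false: $\beta$-values recomputed inside a subconfiguration can drop, so many other deletions work, and the paper uses a different one. It deletes $\Delta=\{x_1,\dots,x_{n-2}\}$, the \emph{leftmost} point of each of the bottom $n-2$ rows, so that $|S'|=\binom{n-2}{2}+2$ exactly (by Lemma~\ref{lem:row-nonempty}, with no boundary case), and it uses the reflection assumption to prove the key property that every $(n-2)$-cup in $S'$ starts at a point $x$ with $i=\beta(x)\le n-2$, $x_i<x$, and the edge $x_ix$ labeled $1$. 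That single property does double duty: it shows $S'$ is $(n-1)$-cup free (any $(n-1)$-cup in $S'$ would extend \emph{left} by $x_i$ to an $n$-cup in $S$), and it provides the lift $C_\epsilon\mapsto x_{i_\epsilon}C_\epsilon$. Note that in your write-up the standing assumption about $p_S$ is never used in the main construction at all — in the paper it is the engine of the whole proof.

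The concrete failure is in your lifting step, which extends the two $(n-2)$-cups to the \emph{right} into the stripped top row. Three things go wrong. First, existence: to extend $C_\epsilon$ (ending at $r_\epsilon$, row $n-2$) you need a top-row vertex $z_\epsilon>r_\epsilon$ with $r_\epsilon z_\epsilon$ labeled $2$; nothing guarantees such an edge. Second, interweaving: even if both extensions exist you need $z_1<z_2$, and the two cups may be forced onto the \emph{same} top-row vertex (e.g.\ when $|R(S)|$ is small), destroying the strict inequality $r<s$ in Definition~\ref{def:inter-laced}. Third, lacedness does not transfer: lacedness of $C_\epsilon$ in $S'$ gives a cup starting at $r_\epsilon$, but your lifted cup ends at $z_\epsilon$, and a cup starting at a top-row vertex can be trivial, so the required identity $|C_p|+|C_q|=n-1$ cannot be certified; Lemma~\ref{lem:special} does not repair this, since it needs an $(n-2)$-cup ending at the start \emph{and} an $(n-2)$-cup starting at the end of an $(n-1)$-cup, data your construction does not produce. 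The paper's leftward lift avoids all three problems: the label-$1$ edges $x_{i_\epsilon}p_\epsilon$ exist by the key property, interweaving follows from the $(\alpha,\beta)$-coordinates ($\alpha(x_{i_\epsilon})=1$ and $i_1<i_2$ force $x_{i_1}<x_{i_2}$, while $r_1<r_2$ is untouched), and lacedness is re-certified from $\beta(x_{i_\epsilon})=i_\epsilon\ge|C_{p_\epsilon}|+1$. Finally, your acknowledged boundary case $|R(S)|=n-1$ leaves $|S'|=\binom{n-2}{2}+1$, strictly below the inductive threshold, and ``read off the rigid full-row structure'' is not yet an argument; as it stands this case, like the lifting, is unproved.
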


\begin{proof}
We proceed by induction.
The base case $n = 3$ can be checked as the following. For any configuration of size $\binom{2}{2} + 2 = 3$, if the points are $x, y, z$ in the increasing order, then $xy$ and $yz$ forms a pair of interweaved laced $2$-cups by definition. 

Now we show the inductive step. 
Fix an arbitrary $n \geq 4$ and the following inductive hypothesis ($\ast$).
\begin{itemize}
\item[($\ast$)] Any 4-cap, $(n-1)$-cup free configuration of size $\binom{n-2}{2} + 2$ contains a pair of interweaved laced $(n-2)$-cups.
\end{itemize}
Fix an arbitrary 4-cap, $n$-cup free configuration $S$ of size $\binom{n-1}{2}+2$. 
Our goal now is to show that $S$ contains a pair of interweaved laced $(n-1)$-cups.

By Lemma \ref{lem:david}, 
we are already done if $p_S$ is an endpoint of a $(n-2)$-cup
and $q_S$ is the starting point of an $(n-2)$-cup at the same time.
It remains for us to show the case where either $p_S$ is not the endpoint of a $(n-2)$-cup, or $q_S$ is not the starting point of an $(n-2)$-cup.
Without loss of generality (e.g. Remark \ref{rem:psqs}), we can assume that $q_S$ is not the starting point of an $(n-2)$-cup by reflecting $S$ in the other case.
Now fix a specified slope labeling and $\alpha$-statistic of $S$ to use (Theorem \ref{thm:label}).

\begin{center}
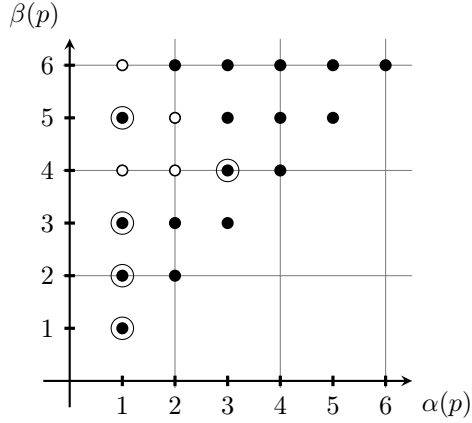
\begin{figure}[h]
\centering 
\begin{tikzpicture}[scale=0.7]

\draw[step=2cm,gray] (-0.5,-0.5) grid (6.5,6.5);
\draw[-stealth,thick] (-0.5,0)--(6.5,0) node[below right]{$\alpha(p)$}; 
\draw[-stealth,thick] (0,-0.5)--(0,6.5) node[above left]{$\beta(p)$};
\foreach \i in {1,2,...,6}
\draw[very thick] 
(\i,.1)--(\i,-.1) node[below,fill=white]{\i}
(-.1,\i) node[left,fill=white]{\i}--(.1,\i) ;

\begin{scope}[every node/.style={circle,thick,draw,fill=black,scale=0.4}]
\foreach \i in {1,...,6}
\foreach \j in {1,...,\i}
\node at (\j, \i) {};
\end{scope}

\begin{scope}[every node/.style={circle,draw,fill=white,scale=0.4}]
\node at (1, 6) {};
\node at (2, 5) {};
\node at (1, 4) {};
\node at (2, 4) {};
\end{scope}

\begin{scope}[every node/.style={circle,draw,fill=none,scale=0.9}]
\node at (1, 1) {};
\node at (1, 2) {};
\node at (1, 3) {};
\node at (3, 4) {};
\node at (1, 5) {};
\end{scope}
\begin{scope}[every node/.style={circle,transparent,anchor=south west,fill=none,scale=0.8}]
\node at (1, 1) {$x_1$};
\node at (1, 2) {$x_2$};
\node at (1, 3) {$x_3$};
\node at (3, 4) {$x_4$};
\node at (1, 5) {$x_5$};
\node at (2, 6) {$x_6$};
\end{scope}

\end{tikzpicture}
\caption{A hypothetical $(\alpha, \beta)$-plane of a 4-cap, $n$-cup free set $S$ of size $\binom{n-1}{2} + 2$ with $n = 7$. There are $n - 3 = 4$ `holes' not in $S$ drawn as white points. The set $\Delta$ of size $n - 2 = 5$ is marked with circles.}
\label{fig:sprime}
\end{figure}
\end{center}

We define the set $S'$ on which we will apply the inductive hypothesis ($\ast$) as the following. 
For each $1 \leq i \leq n - 1$, define $x_i$ as the leftmost point of the $i$'th row $R_i(S)$ of the $(\alpha, \beta)$-plane (Definition \ref{def:rows}).
By Lemma \ref{lem:row-nonempty}, each $x_i$ is well-defined.
Define the set $\Delta = \{x_1, x_2, \cdots, x_{n-2}\}$ of size $n-2$.
Note that the point $x_{n-1}$ is excluded from $\Delta$ by definition,
and that $x_{n-1} = q_S$ by Definition \ref{def:endpts}.
Define the set $S'$ of size $\binom{n-2}{2} + 2$ as $S' = S \setminus \Delta$ (see Figure \ref{fig:sprime}).
We will show later that $S'$ is $(n-1)$-cup free, so that we can apply the induction hypothesis ($\ast$) to $S'$.

We assumed without loss of generality that $q_S$ is not the starting point of an $(n-2)$-cup. Using it, we show the following consequences.
\begin{itemize}
\item[($\ast\ast$)] Let $x$ be the starting point of an arbitrary $(n-2)$-cup $C$ in $S'$ and let $i = \beta(x)$. Then the following hold.
\begin{enumerate}
	\item $x < q_S$
	\item $x \not\in R_{n-1}(S)$ and $i \leq n - 2$
	\item $x_i < x$ and the edge $x_ix$ is labeled 1, so $x_iC$ is an $(n-1)$-cup in $S$
	\item $\alpha(x_i) = 1$ and $\alpha(x) = 2$
\end{enumerate}
\end{itemize}

First we show $x < q_S$, the first item of ($\ast\ast$). Assume otherwise that $x \geq q_S$. Then the case $x = q_S$ directly contradicts our assumption, so we should have $x > q_S$.
If the edge $q_Sx$ is labeled 1, then $q_S C$ is an $(n-1)$-cup so it also contradicts our assumption.
So $q_Sx$ should be labeled 2. But in this case,
as $q_S$ is the ending point of some $(n-1)$-cup $D$ by definition, $Dx$ is an $n$-cup which contradicts that $S$ is $n$-cup free.
This concludes the proof of $x < q_S$ by contradiction.

As $q_S$ is the minimum value of $R(S) = R_{n-1}(S)$, we also get $x \not\in R_{n-1}(S)$ and $i = \beta(x) \leq n - 2$ (the second item of ($\ast\ast$)).
Therefore, $x_i \in \Delta$ and as $x_i$ is minimal in the row $R_i(S)$ that also contains $x$,
we have $x_i < x$ and the edge $x_i x$ is labeled 1.
Consequently, the $(n-1)$-cup $C$ extends to left with $x_i$ (the third item of ($\ast\ast$)).

If $\alpha(x_i) > 1$, then we can extend the $(n-1)$-cup $x_iC$ to left in $S$, so it contradicts that $S$ is $n$-cup free.
So $\alpha(x_i) = 1$.
Next, we show that $\alpha(x) = 2$.
Because the edge $x_{i}p$ is labeled 1,
we have $\alpha(p) \geq 2$.
If $\alpha(p) > 2$, then we have a 3-cup $D$ that ends with $p_\epsilon$ and an edge with label 1.
Now the cup $D$ and $C_\epsilon$ meets at $p_\epsilon$, and they form a cup because $D$ ends with an edge with label 1. 
The cup is of size $|D| + |C_\epsilon| - 1 = n$ and we reach contradiction.
So the equality $\alpha(p_\epsilon) = 2$ holds (the fourth item of ($\ast\ast$)).
This concludes the proof of ($\ast\ast$).

We now show that $S'$ is $(n-1)$-cup free.
Assume otherwise that $S'$ contains an $(n-1)$-cup $C_0$ from some point $x$ to $y$.
Let $i = \beta(x)$. 
Then since $x$ is also the starting point of some $(n-2)$-cup in $S'$, the property ($\ast\ast$) implies that  the edge $x_i x$ has label 1.
Now the $(n-1)$-cup $C_0$ in $S'$ extends to left with $x_i$ in $S$, contradicting that $S$ is $n$-cup free.

As $S'$ is $(n-1)$-cup free, we can apply the inductive hypothesis ($\ast$).
Doing so, we find a pair $(C_1, C_2)$ of interweaved laced $(n-2)$-cups in $S'$, each from $p_1$ to $r_1$ and $p_2$ to $r_2$ so that $p_1 < p_2 \leq r_1 < r_2$.
Define $i_1 = \beta(p_1)$ and $i_2 = \beta(p_2)$. 
By the property ($\ast\ast$) applied to $C_1$ and $C_2$ respectively, we have the following.
\begin{itemize}
	\item $p_1, p_2 < q_S$
	\item $i_1, i_2 < n - 1$
	\item $x_{i_1} p_1$ and $x_{i_2}p_2$ are edges with label 1
	\item $x_{i_1}C_1$ are $x_{i_2}C_2$ are $(n-1)$-cups in $S$
	\item $\alpha(x_{i_1}) = \alpha(x_{i_2}) = 1$ and $\alpha(p_1) = \alpha(p_2) = 2$
\end{itemize}
We show that the pair $(x_{i_1}C_1, x_{i_2}C_2)$ of $(n-1)$-cups is interweaved and laced.
This completes the inductive step and the whole proof.

First, we show that the pair is interweaved.
That is, whether $x_{i_1} < x_{i_2} \leq r_1 < r_2$ is true.
By the assumptions $x_{i_2} < p_2 \leq r_1 < r_2$,
it only remains for us to show that $x_{i_1} < x_{i_2}$.
Applying Corollary \ref{lem:4capfree} to $p_1 < p_2$ and $\alpha(p_1) = \alpha(p_2) = 2$, we have $\beta(p_1) < \beta(p_2)$ which is exactly $i_1 < i_2$ by definition.
Applying Corollary \ref{lem:4capfree} again to $\beta(x_{i_1}) = i_1 < i_2 = \beta(x_{i_2})$ and $\alpha(x_{i_1}) = \alpha(x_{i_2}) = 1$,
we have $x_{i_1} < x_{i_2}$.
This completes the proof that the pair $(x_{i_1}C_1, x_{i_2}C_2)$  is interweaved.

Now we show that each cup in the pair $(x_{i_1}C_1, x_{i_2}C_2)$ is laced.
Let $\epsilon \in \{1, 2\}$ be any of the index 1 or 2.
We extend the laced $(n-2)$-cup $C_\epsilon$ in $S'$ to show that $x_{i_\epsilon}C_\epsilon$ in $S$ is also laced.
Because $C_\epsilon$ from $p_\epsilon$ to $r_\epsilon$ is laced in $S'$, there are cups $C_{p_\epsilon}$ and $C_{r_\epsilon}$ in $S'$
that ends with $p_\epsilon$ and starts with $r_\epsilon$ respectively,
so that $|C_{p_\epsilon}| + |C_{r_\epsilon}| = n - 2$.

Say that $C_{p_\epsilon}$ starts with a point $z$ in $S'$.
Then $z \leq p_\epsilon < q_S$ so $\beta(z) < n - 1$ by the definition of $q_S$.
Consequently, we have $\alpha(z) \geq 2$ as $\beta(z) < n - 1$ and $z$ is not in $\Delta$.
So $z$ is the endpoint of an edge $yz$ of label 1 in $S$.
We can extend $C_{p_\epsilon}$ to left to a cup $yC_{p_\epsilon}$ of size $|C_{p_\epsilon}| + 1$ in $S$, and the cup $yC_{p_\epsilon}$ ends with $p_\epsilon$.
This implies that $|C_{p_\epsilon}| + 1 \leq \beta(p_\epsilon) = i_\epsilon = \beta(x_{i_\epsilon})$.
So in particular, $\beta(x_{i_\epsilon}) + |C_{r_i}| \geq |C_{p_\epsilon}| + |C_r| + 1 = n - 1$
and $x_{i_\epsilon}C_\epsilon$ is a laced $(n-1)$-cup for any of $\epsilon = $ 1 or 2. This concludes the proof.

\end{proof}

We obtain the main theorem (Theorem \ref{thm:main-thm}) immediately as the following.
\begin{proof}[Proof of Theorem \ref{thm:main-thm}]
Combine Theorem \ref{thm:main-lemma} with Lemma \ref{lem:inter-gon}.
\end{proof}

We end with a conjecture that generalizes Theorem \ref{thm:main-lemma}.
\begin{conjecture}
\label{conj:interweaved}
For any $n \geq 3$ and $1 \leq k \leq n$, any 4-cap, $n$-cup free configuration of size $\binom{n-1}{2}+k$ contains $k$ mutually interweaved laced $(n-1)$-cups.
\end{conjecture}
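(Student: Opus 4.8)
The plan is to prove the conjecture by strong induction on $n$, establishing the full statement (all $1 \le k \le n$) at each level, with base case $n = 3$ checked by hand as in the base case of Theorem \ref{thm:main-lemma}. Before the induction proper, two families of cases fall away. When $k = n$ the required size $\binom{n-1}{2}+n = \binom{n}{2}+1$ exceeds the maximum size $\binom{n}{2}$ of a $4$-cap, $n$-cup free configuration permitted by Theorem \ref{thm:capcup-set}, so no such configuration exists and the statement is vacuous. When $k = 1$ only a single laced $(n-1)$-cup is needed, which is comparatively easy: in the hard subcase where $p_S$ ends an $(n-2)$-cup, any $(n-1)$-cup starting at $p_S$ is automatically laced. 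For the genuine range $2 \le k \le n-1$ I would run the engine of Theorem \ref{thm:main-lemma}: delete the set $\Delta = \{x_1, \dots, x_{n-2}\}$ of leftmost points of the bottom $n-2$ rows to form $S' = S \setminus \Delta$ of size $\binom{n-2}{2}+k$, confirm that $S'$ is $(n-1)$-cup free, apply the induction hypothesis at level $n-1$ to obtain $k$ mutually interweaved laced $(n-2)$-cups, and lift each by prepending its row-minimum $x_{i_j}$.

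The lifting step generalizes cleanly from $k = 2$. The key property $(\ast\ast)$ of Theorem \ref{thm:main-lemma} — that every starting point $x$ of an $(n-2)$-cup in $S'$ satisfies $x < q_S$, lies in a row $i \le n-2$, has $\alpha(x) = 2$, and is joined to $x_i$ by a label-$1$ edge with $\alpha(x_i) = 1$ — was derived using only the assumption that $q_S$ is not the starting point of an $(n-2)$-cup, never the value $k = 2$. Granting $(\ast\ast)$, the lifted family $x_{i_1}C_1, \dots, x_{i_k}C_k$ stays mutually interweaved: the right endpoints $r_1 < \dots < r_k$ are unchanged, while the new left endpoints satisfy $x_{i_1} < \dots < x_{i_k}$ by two applications of Corollary \ref{lem:4capfree} (first $\alpha(p_j) = 2$ with $p_j < p_l$ forces $i_j < i_l$, then $\alpha(x_{i_j}) = 1$ with $i_j < i_l$ forces $x_{i_j} < x_{i_l}$), and for $j < l$ the chain $x_{i_l} < p_l \le r_j$ supplies the interleaving $x_{i_l} \le r_j$. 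Lacedness of each lifted cup follows from the per-cup extension argument of Theorem \ref{thm:main-lemma} applied to every index $\epsilon \in \{1, \dots, k\}$. Thus the inductive step goes through whenever $q_S$ does not start an $(n-2)$-cup, and — running the same argument on $S^\op$ via Lemma \ref{lem:laced-mirror} and Remark \ref{rem:psqs} — also whenever $p_S$ does not end an $(n-2)$-cup.

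This isolates a single obstructing case, which I expect to be the main obstacle: the \emph{doubly-rich} configurations in which $p_S$ ends an $(n-2)$-cup and $q_S$ starts an $(n-2)$-cup simultaneously. This is exactly the hypothesis of Lemma \ref{lem:david}, which for $k = 2$ produces precisely the two cups required but offers nothing for larger $k$. My proposed attack is to push the reduction one step further: one checks that, in the doubly-rich case, every $(n-1)$-cup of $S'$ must start at $q_S$ (any start $x$ with $\beta(x) \le n-2$ would let $x_{\beta(x)}$ extend it to an $n$-cup, while a start in $R(S) \cap L(S)$ forces $x = p_S = q_S$). Hence $S'' = S' \setminus \{q_S\}$, of size $\binom{n-2}{2}+(k-1)$, is $(n-1)$-cup free, so the induction hypothesis yields $k-1$ mutually interweaved laced $(n-2)$-cups, which lift as before to $k-1$ mutually interweaved laced $(n-1)$-cups of $S$. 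It then remains to manufacture one further laced $(n-1)$-cup, threaded through the rich corner structure at $q_S$ (and $p_S$), and to show that it interweaves with the $k-1$ already obtained.

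Constructing this extra cup is the crux, and I expect it to demand a genuine strengthening of Lemmas \ref{lem:david} and \ref{lem:special}: one must route an $(n-1)$-cup out of the $(n-2)$-cups hanging off $p_S$ and $q_S$ together with the $(n-1)$-cups witnessing $p_S \in L(S)$ and $q_S \in R(S)$, while controlling its two endpoints so that they land in the correct slot relative to $x_{i_1}, \dots, x_{i_{k-1}}$ and $r_1, \dots, r_{k-1}$ in the interweaving order. The most delicate boundary is the overlap $p_S = q_S$, where the corner degenerates to one point that both ends and starts $(n-1)$-cups; here a finer label analysis of the $(\alpha, \beta)$-plane near that point seems unavoidable. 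It is precisely the absence of a clean $k$-cup analogue of Lemma \ref{lem:david} for this doubly-rich corner that, I believe, keeps the conjecture open beyond $k = 2$, and I would regard a proof of such an analogue as the decisive ingredient.
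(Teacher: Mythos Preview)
The statement you are attempting is recorded in the paper as a \emph{conjecture} and is explicitly left open; no proof is given. Your proposal is likewise not a proof but a roadmap, and you are candid about this in your final paragraph. On the diagnosis you and the paper agree: the argument of Theorem~\ref{thm:main-lemma} generalises verbatim to any $k$ in the range $2 \le k \le n-1$ whenever one may assume, via mirror symmetry, that $q_S$ does not start an $(n-2)$-cup; the sole obstruction is the doubly-rich case, which for $k=2$ is dispatched by Lemma~\ref{lem:david} but which yields only two cups and no more. The paper says the same thing in different words (``the proof exploits the mirror symmetry and essentially applies the inductive hypothesis twice''); your framing via the doubly-rich corner is equivalent and arguably sharper. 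Your treatment of $k=n$ as vacuous is correct.

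Your proposed attack on the doubly-rich case --- pass to $S'' = S' \setminus \{q_S\}$, apply induction for $k-1$, lift, then manufacture one more cup --- is broadly sound when $p_S \ne q_S$, though the lacedness step needs a small patch: the inequality $z < q_S$ used in the paper to force $\beta(z) \le n-2$ is no longer available, but if $\beta(z) = n-1$ one can instead analyse the label of $q_S z$ (label~$2$ gives an $n$-cup; label~$1$ lets $q_S$ itself extend $C_{p_\epsilon}$ to the left, recovering the needed $+1$). So the lifting of the $k-1$ cups does go through. What genuinely remains open is exactly what you flag: producing the extra $(n-1)$-cup and slotting it into the interweaving order, and handling the degenerate overlap $p_S = q_S$, where your argument that every $(n-1)$-cup of $S'$ starts at $q_S$ already breaks (the label-$1$ branch of your case split only concludes $p_S = q_S$, not a contradiction). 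Neither you nor the paper resolves this, and that is why the conjecture stands.
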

Theorem \ref{thm:main-lemma} is a special case $k=2$ of this conjecture. We can also prove the case $k=1$ with a similar induction argument with the same $S' = S \setminus \Delta$.
The case $k = n$ is an immediate consequence of the discussion at the end of Section \ref{sec:alpha-beta}.
The proof of Theorem \ref{thm:main-lemma} fails to extend because it exploits the mirror symmetry and essentially applies the inductive hypothesis twice to force the 'rightmost' laced $(n-1)$-cup starting with $p_S$ and the 'leftmost' laced $(n-1)$-cup ending with $q_S$.
This special case is then covered by Lemma \ref{lem:special}.

\section*{Acknowledgement}
The author is indebted to David Speyer for many illuminating discussions and his help in making the presentation much more organized and clear.
The author thanks to Andreas Holmsen for directing the author to the work of Moshkovitz and Shapira \cite{moshkovitz2014ramsey} that inspired the notion of $(\alpha, \beta)$-plane. 
The author also thanks to Martin Balko and Pavel Valtr for their work \cite{balko2017sat} that inspired the main Theorem \ref{thm:main} and their thoughtful feedback on the early version of the draft.
\bibliographystyle{abbrv}
\bibliography{main.bib}

\end{document}